\newcommand {\demo}{\hskip -0.6cm{\bf Proof:  }}
\newcommand {\fim}{\hfill{$\square$}\vskip 1pc}
\newcommand {\R}{\mathbb{R}}
\newcommand {\N}{\mathbb{N}}
\newcommand {\Z}{\mathbb{Z}}
\newcommand {\F}{\mathbb{F}}
\newcommand {\G}{\mathrm{G}}
\newtheorem{theorem}{Theorem}[section]
\newtheorem{lema}[theorem]{Lemma}
\newtheorem{corolario}[theorem]{Corollary}
\newtheorem{definition}[theorem]{Definition}
\newtheorem{proposition}[theorem]{Proposition}
\newtheorem{example}[theorem]{Example}
\newtheorem{obs}[theorem]{Remark}
\begin{document}

\title{Representations of relative Cohn path algebras}

\author{
\small{Crist\'{o}bal Gil Canto, }\\
\footnotesize{Departamento de \'{A}lgebra, Geometr\'{i}a y Topolog\'{i}a}\\
\footnotesize{Universidad de M\'{a}laga}\\
\footnotesize{29071 M\'{a}laga, Spain} \\
\footnotesize{\texttt{cristogilcanto@gmail.com / cgilc@uma.es}}
\and
\small{Daniel Gon\c{c}alves}\\
\footnotesize{Departmento de Matem\'atica}\\
\footnotesize{Universidade Federal de Santa Catarina}\\
\footnotesize{88040-900 Florian\'{o}polis - SC, Brazil}\\
\footnotesize{\texttt{daemig@gmail.com}}}

\date{}

\maketitle

\begin{abstract} We study relative Cohn path algebras, also known as Leavitt-Cohn path algebras, and we realize them as partial skew group rings (to do this we prove uniqueness theorems for relative Cohn path algebras). Furthermore, given any graph $E$ we define $E$-relative branching systems and prove how they induce representations of the associated relative Cohn path algebra. We give necessary and sufficient conditions for faithfulness of the representations associated to $E$-relative branching systems (this improves previous results known to Leavitt path algebras of row-finite graphs with no sinks). To prove this last result we show first a version, for relative Cohn-path algebras, of the reduction theorem for Leavitt path algebras.
\end{abstract}

\vspace{1.0pc}
2010 Mathematics Subject Classification: 16S99, 16G99.

\vspace{1.0pc}
Keywords: relative Cohn path algebras, uniqueness theorems, branching systems, faithful representations, partial skew group rings, reduction theorem.

\section{Introduction}

Leavitt path algebras of directed graphs are considered a mainstream topic of research in algebra. Introduced initially in \cite{AA} and \cite{AMP}, they are the algebraic counterparts of graph $C^*$-algebras and generalizations of Leavitt algebras. Relative Cohn path algebras, as they are named in \cite{AAS}, are generalizations of Leavitt path algebras in which the so-called Cuntz-Krieger relation (CK2) hold just for a subset of regular vertices of the directed graph. Therefore relative Cohn path algebras include Cohn path algebras (the case when we completely omit the aforementioned (CK2) axiom), Leavitt path algebras, and everything in between. They are also known as Cohn-Leavitt path algebras; see for example \cite{KO} and \cite{L1}.

Previously to the consideration of relative Cohn path algebras, its $C^*$-analog was introduced in \cite{MT}, where Muhly and Tomforde presented the relative graph $C^*$-algebra $C^*(E,S)$ of a graph $E$ and $S$ a subset of regular vertices of $E$. Moreover, relative Cuntz-Krieger algebras associated to finitely aligned higher-rank graphs were defined in \cite{Sims}. Ara and Goodearl, in \cite{AG}, introduced and investigated relative Cohn path algebras of separated graphs, a more general class of graphs that encompass relative graph algebras.

Differently from what it might seem at first, the class of relative Cohn path algebras is equal to the class of Leavitt path algebras, see \cite[Theorem 1.5.17]{AAS} (the same is true for relative graph C*-algebras and graph C*-algebras, see \cite{MT}). In any case, working with relative Cohn path algebras is an useful and advantageous tool to unite considerations regarding both Cohn and Leavitt path algebras. For example, in \cite{L2} the use of relative Cohn path algebras is shown to be necessary when representing a Leavitt path algebra $L_K(E)$ as a direct limit of subalgebras of its finite subgraphs (in the case when $E$ fails to be row-finite); here Vas also presents examples which illustrate the benefits of considering relative Cohn path algebras over using Leavitt path algebras alone (similar considerations are made in \cite{MT} for relative graph algebras).
Furthermore, the Gelfand-Kirillov dimension and the invariant basis number (IBN) of relative Cohn path algebras are studied in \cite{MM} and \cite{KO}, respectively. In the context of C*-algebra theory the study of relative algebras is also of great importance. For example, it is not clear if the relative Cuntz-Krieger algebras of finitely aligned higher rank graphs form a larger class then the class of 'non-relative' algebras, see \cite{Sims}. In relation with dynamics and operator theory, relative graph algebras are the key ingredient in the unifying study of KMS states associated to graph algebras, see \cite{toke}. For the latter, the key step is to build a relative boundary path space and to realize relative graph C*-algebras as partial crossed products.  To obtain an algebraic version of this result is one of our goals, as we describe below.

In this paper we connect the theory of relative Cohn path algebras with another key concept arising from operator algebras theory: partial skew group rings. Partial skew group rings were introduced by Exel and Dokuchaev in \cite{Ex} as algebraic analogues of $C^*$ partial crossed products. This notion has been in constant development recently and for example \cite{BG, Misha, dg, GOR, OinertChain} illustrate how the interaction between the theory of partial skew group rings and the theory of non-commutative rings can be fruitful. In our context, given a graph $E$, one could maybe use the equivalence between the class of Leavitt path algebras and the class of relative Cohn path algebra to apply the results in \cite{rg1} and obtain a realization of a relative Cohn path algebra of $E$ as a partial skew group ring. But, this would lead to an action of the free group on the edges of an extended graph and not on the  free group on the edges of $E$. Our approach is more natural, as we define a relative boundary path space, and an action of the free group on the edges of the graph $E$, that give rise to the relative Cohn path algebra. Using our characterization we are able to describe maximal commutativity of a certain abelian subalgebra of the relative Cohn algebra in terms of a combinatorial property of the graph (which we call Relative Condition~(L)). This commutative subalgebra corresponds to the one studied in \cite{GN} for Leavitt path algebras.

The other aspect of our work is to establish a connection between the theory of representations of relative Cohn path algebras with the theory of branching systems. Notice that branching systems, and representations arising from them, have connections with wavelets, C*-algebra theory, ring theory, among other areas (see \cite{MR3404559,FGKP, GLR1, GLR, GLR2, GR4, MR2903145, GR3, MR2848777} for example). In this paper we prove generalized versions of several representation theorems for Leavitt path algebras in \cite{GR}, that is, we show how to obtain representations of relative Cohn path algebras from $E$-relative (algebraic) branching systems and study them. In particular we are able to obtain deeper versions, for Leavitt path algebras, of several results given in \cite{GR}. Concretely, we give necessary and sufficient conditions to guarantee faithfulness of a representation induced by an $E$-relative branching system. This result also extends a key result in \cite{GR4} for the algebras of separated graphs that can be seen as relative Cohn path algebras. Further to improve the known result for Leavitt path algebras, our proof is simpler, and relies on a version of the reduction theorem for relative Cohn path algebras (a result we prove in Section \ref{Sec:Uniqueness}). As an application of our theorem we build faithful representations, arising from $E$-relative branching systems, of any relative Cohn path algebra (this also extends results in \cite{GR4} for row finite graphs with no sinks).

Our work is organized as follows. First in Section \ref{Sec:Prelimin} we recall some basic terminology and definitions about relative Cohn path algebras and partial skew group rings. In Section \ref{Sec:Uniqueness} we prove the uniqueness theorems for relative Cohn path algebras, which we will need in order to realize them as partial skew group rings. Furthermore, we show the so-called reduction theorem for relative Cohn path algebras (Proposition \ref{PropReduction}), in the spirit of the one originally given in \cite{AMMS} for Leavitt path algebras. In Section \ref{section2} we apply the previous results to associate relative Cohn path algebras to partial skew group rings.
Finally, we devote Section \ref{Sec:branching} to the introduction of the notion of $E$-relative (algebraic) branching systems and the representations of the relative Cohn path algebra $C_K^X(E)$ induced by them. 
We present one of the main results of the paper, Theorem~\ref{faithfulrep}, where we describe necessary and sufficient conditions for a representation arising from a $E$-relative branching system to be faithful. Furthermore, we construct faithful representations, arising from $E$-relative algebraic branching systems, associated to any relative Cohn path algebra.

\section{Preliminaries}\label{Sec:Prelimin}

Throughout the paper $K$ denotes a field and  $K^{\times}:=K\setminus\{0\}$.

\subsection{Relative Cohn path algebras}
Let $E=(E^0,E^1,r,s)$ be a directed graph. The elements of $E^0$ are called \emph{vertices} and the elements of $E^1$ \emph{edges}. If a vertex $v$ emits no edges, that is, if $s^{-1}(v)$ is empty, then $v$
is called a \emph{sink}. A vertex $v$ is called a \emph{regular vertex} if $s^{-1}(v)$ is a finite non-empty set. The set of regular vertices is denoted by ${\rm Reg}(E)$.

A \emph{path of length $n$} in $E$ is a sequence $\xi_1 \xi_2 \ldots \xi_n$ of edges in $E$ such that $r(\xi_i)=s(\xi_{i+1})$ for $i \in \{1,2,\ldots,n-1\}$. If $\xi$ is a path of length $n$, then we write $|\xi|=n$.  We consider vertices in $E^0$ as paths of length zero. The set of all finite paths of length $n$ is denoted by $E^n$ and we let ${\rm Path}(E)=\cup_{n=0}^{\infty} E^n$.

An \emph{infinite path} in $E$ is an infinite sequence $\xi_1 \xi_2 \ldots$ of edges in $E$ such that $r(\xi_i)=s(\xi_{i+1})$ for $i \in \N$. The set of all infinite paths in $E$ is denoted by $E^\infty$.

As usual, the range and source maps can be extended from $E^1$ to $E^\infty \cup {\rm Path}(E)$ by defining $s(\xi):=s(\xi_1)$ for $\xi=\xi_1\xi_2\ldots \in E^\infty$ or $\xi=\xi_1\ldots\xi_n \in {\rm Path}(E)$,
and $r(\xi):=r(\xi_n)$ for $\xi=\xi_1\ldots\xi_n \in {\rm Path}(E)$.


A \emph{closed} path $\alpha=e_1...e_n$ in the graph $E$ is a path such that $r(e_i)=s(e_{i+1})$ and $r(\alpha):=r(e_n)=v=s(e_1)=:s(\alpha)$. The closed path $\alpha$ is called a \emph{cycle} if it does not pass through any of its vertices twice, that is, if $s(e_{i})\neq s(e_{j})$ for every $i\neq j$. An \emph{exit} for a path $\alpha =e_{1}\dots e_{n}$ is an edge $e$ such that $s(e)=s(e_{i})$ for some $i$ and $e\neq e_{i}$. We say that $E$ satisfies \emph{Condition }(L) if every simple closed path in $E$ has an exit, or, equivalently, every cycle in $E$ has an exit.

\begin{definition}\label{Def:relativeCohn}{\rm  Let $E$ be an arbitrary graph. Let $X$ be any subset of ${\rm Reg}(E)$. The \emph{Cohn path algebra of $E$ relative to $X$}, denoted $C_K^X(E)$, is the free $K$-algebra generated by the sets $E^0 \cup E^1 \cup \{e^* \ | \ e \in E^1\}$ with relations:

(V) $vw=\delta_{v,w}v$ for $v,w \in E^0$,

(E1)  $s(e)e=er(e)=e$ for $e \in E^1$,

(E2) $r(e)e^*=e^*s(e)=e^*$ for $e \in E^1$,

(CK1) $e^*f = \delta_{e,f}r(e)$ for $e,f \in E^1$ and,

(XCK2) $v = \sum_{e \in s^{-1}(v)}ee^*$ for every vertex $v \in X$.
}
\end{definition}

We denote $Y:={\rm Reg}(E) \setminus X$ throughout  this paper.

We immediately see that the \emph{Cohn path algebra} $C_K(E)$ corresponds to $C_K(E)=C_K^{\emptyset}(E)$ and the \emph{Leavitt path algebra} $L_K(E)$ to $L_K(E)=C_K^{{\rm Reg}(E)}(E)$.

From the axioms of the Definition \ref{Def:relativeCohn} we have that every element of $C_K^X(E)$ can be represented as a sum of the form $\sum_{i=1}^n k_i \alpha_i\beta_i^*$ for some $n \in \mathbb{N}$, paths $\alpha_i, \beta_i$ such that $r(\alpha_i)=r(\beta_i)$, and $k_i \in K$ for every $i=1, \ldots,n$.

We can define an $K$-linear involution $*$ on $C_K^X(E)$ in the following way: $(\sum_{i=1}^n k_i \alpha_i\beta_i^*)^*=\sum_{i=1}^n k_i \beta_i \alpha_i^*$ for $n \in \mathbb{N}$, paths $\alpha_i, \beta_i$ such that $r(\alpha_i)=r(\beta_i)$, and $k_i \in K$ for every $i=1, \ldots,n$.

Also $C_K^X(E)$ is an unital ring if and only if $E^0$ is finite (where the identity is the sum of elements of $E^0$); it has local units if $E^0$ is not finite (the finite sums of distinct vertices are local units).

Following \cite[Corollary 2.1.5]{AAS} another property is that it is also naturally graded by $\mathbb{Z}$ so that the $n$-component is:
$$C_K^X(E)_n=\left \{\sum_i k_i \alpha_i\beta_i^* \ | \ \alpha_i, \beta_i \text{ are paths, } k_i \in K, |\alpha_i|-|\beta_i|=n \text{ for all } i \right\}.$$

 We finish this subsection recalling that any relative Cohn path algebra $C_K^X(E)$ is isomorphic to the Leavitt path algebra of a graph $E(X)$ which is obtained by adding certain new vertices and edges to $E$.

\begin{definition}{\rm \label{graphex} (as in \cite{AAS} Definition 1.5.16)
Let $E$ be an arbitrary graph, $X$ be a subset of ${\rm Reg}(E)$, and $Y={\rm Reg}(E) \setminus X$. Let $Y'=\{v':v\in Y\}$. For $v\in Y$, and for each edge $e\in r^{-1}(v)$, consider a new symbol $e'$. Define the graph $E(X)$ as follows:
$$E(X)^0=E^0 \cup Y' \text{ and } E(X)^1=E^1 \cup \{e':r(e)\in Y \}.$$ For each $e\in E^1$ let  $r_{E(X)}(e')=r(e)'$
and $s_{E(X)}(e')=s(e)$, and $r_{E(X)}(e)=r(e)$
and $s_{E(X)}(e)=s(e)$.
}
\end{definition}

\begin{theorem}\label{relativetoLPA} (as in  \cite{AAS}, Theorem 1.5.18)
Let $E$ be a graph, $X$ a subset of ${\rm Reg}(E)$, and let $E(X)$ be the graph constructed above (Definition~\ref{graphex}). Then there is an isomorphism $\phi:C_K^X(E)\rightarrow L_K(E(X))$ such that $\phi(v)=v+v'$ if $v\in Y$ and $\phi(v)=v$ otherwise. Furthermore, $\phi(e)=e$ if $r_E(e)\notin Y$ and $\phi(e)=e+e'$ if $r_E(e)\in Y$. Moreover, the inverse of $\phi$ is given by an isomorphism $\psi:L_K(E(X))\rightarrow C_K^X(E)$ such that $\psi(v)=v$ if $v\notin Y$, and $\psi(v)= \sum_{e\in s^{-1}(v)} e e^*$, $\psi(v')=v-\sum_{e\in s^{-1}(v)}ee^*$ if $v\in Y$. Also $\psi(e)=e$ if $r(e)\notin Y$, and $\psi(e)=e \sum_{f\in s^{-1}(v)}ff^*$, $\psi(e')=e (v-\sum_{f\in s^{-1}(v)}ff^*)$ if $r(e)=v \in Y$.
\end{theorem}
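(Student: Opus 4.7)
The plan is to construct both maps directly on generators and invoke the defining relations of each algebra to show they extend to well-defined $K$-algebra homomorphisms, then verify on generators that the two are mutually inverse. The crucial auxiliary element is $p_v := \sum_{f \in s^{-1}(v)} ff^* \in C_K^X(E)$ for $v \in Y$; a direct application of (CK1) yields $p_v^2 = p_v$ and $vp_v = p_v v = p_v$, so $p_v$ and $v - p_v$ form a pair of orthogonal idempotents summing to $v$. The assignments $\psi(v) = p_v$ and $\psi(v') = v - p_v$ mirror the splitting $\phi(v) = v + v'$ on the $L_K(E(X))$ side.

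To verify $\phi$ is well defined, the only delicate relation is (XCK2): for $v \in X$ and $e \in s^{-1}(v)$ with $r(e) \in Y$, the product $\phi(e)\phi(e)^* = (e + e')(e^* + (e')^*)$ has cross-terms $e(e')^*$ and $e'e^*$, but these vanish in $L_K(E(X))$ because $r(e)$ and $r(e') = r(e)'$ are distinct vertices of $E(X)$, so that $e(e')^* = e \cdot r(e) \cdot r(e)' \cdot (e')^* = 0$. Hence $\sum_{e \in s^{-1}_E(v)} \phi(e)\phi(e)^*$ collapses to $\sum_{f \in s^{-1}_{E(X)}(v)} ff^*$, which equals $v = \phi(v)$ by (CK2) in $L_K(E(X))$. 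To verify $\psi$ is well defined, the main relation is (CK2) at $v \in \mathrm{Reg}(E)$: the identity $\psi(e)\psi(e)^* + \psi(e')\psi(e')^* = e p_{r(e)} e^* + e(r(e) - p_{r(e)}) e^* = ee^*$ collapses the sum over $s^{-1}_{E(X)}(v)$ to $\sum_{e \in s^{-1}_E(v)} ee^*$, which equals $v$ when $v \in X$ (by (XCK2)) and equals $p_v = \psi(v)$ when $v \in Y$. The (CK1) relation for $\psi$ splits into four cases according to whether each of the two edges is original or primed, each handled by the orthogonality $p_v(v - p_v) = 0$.

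Mutual inverses then follow on generators: $\psi(\phi(v)) = p_v + (v - p_v) = v$ for $v \in Y$, and $\phi(\psi(v)) = \phi(p_v) = v$ in $L_K(E(X))$ using (CK2) at $v$ together with the cross-term vanishing described above; the verifications for edges are analogous, with $\phi(\psi(e)) = (e+e')\cdot r(e) = e$ for $r(e) \in Y$ and $\phi(\psi(e')) = (e+e')\cdot r(e)' = e'$. The main obstacle is bookkeeping: every check splits across whether a vertex lies in $X$, $Y$, $Y'$, or is non-regular, and whether an edge has range in $Y$; consistent use of the orthogonality $p_v(v - p_v) = 0$ and the cross-term vanishing keeps each branch short.
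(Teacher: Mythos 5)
Your proposal is correct and follows essentially the same route as the proof the paper relies on (the paper itself only cites \cite[Theorem 1.5.18]{AAS}, where the argument is exactly this: define $\phi$ and $\psi$ on generators, verify the defining relations via the universal property, and check mutual inverses on generators). Your identification of $p_v=\sum_{f\in s^{-1}(v)}ff^*$ and $v-p_v$ as orthogonal idempotents, the cross-term vanishing $e(e')^*=0$, and the observation that $\mathrm{Reg}(E(X))=\mathrm{Reg}(E)$ so that (CK2) applies at every $v\in Y$ are precisely the points on which the standard argument turns.
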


\subsection{Partial skew group rings}

For later use, we recall the definitions of a partial action and a partial skew group ring as in \cite{Ex}.

A \emph{partial action} of a group $\G$ on a set $\Omega$ is a pair $\alpha= (\{D_{t}\}_{t\in \G}, \ \{\alpha_{t}\}_{t\in \G})$, where for each $t\in \G$, $D_{t}$ is a subset of $\Omega$ and $\alpha_{t}:D_{t^{-1}} \rightarrow D_{t}$ is a bijection such that $D_{e} = \Omega$, $\alpha_{e}$ is the identity in $\Omega$, $\alpha_{t}(D_{t^{-1}} \cap D_{s})=D_{t} \cap D_{ts}$ and $\alpha_{t}(\alpha_{s}(x))=\alpha_{ts}(x),$ for all $x \in D_{s^{-1}} \cap D_{s^{-1} t^{-1}}.$ In case $\Omega$ is an algebra or a ring then the subsets $D_t$ should also be ideals and the maps $\alpha_t$ should be isomorphisms. Associated to a partial action of a group $G$ in a ring $A$ the \emph{partial skew group ring} $A\rtimes_{\alpha} \G$ is defined as the set of all finite formal sums $\sum_{t \in G} a_t\delta_t$, where, for all $t \in G$, $a_t \in D_t$ and $\delta_t$ are symbols. Addition is defined in the usual way and multiplication is determined by $(a_t\delta_t)(b_s\delta_s) = \alpha_t(\alpha_{-t}(a_t)b_s)\delta_{t+s}$.

\section{Uniqueness theorems for relative Cohn path algebras}\label{Sec:Uniqueness}

In this section we develop the main tools we will use in the next sections. These are also interesting results in their own. We begin with the so-called reduction theorem for relative Cohn path algebras (Proposition \ref{PropReduction}), for which we need an auxiliary result first.

\begin{lema}\label{Lem:corner} Let $c=e_1e_2\cdots e_n$ be a cycle without exits based at a vertex $w$, and denote $\mu_0=w$, $\mu_k=e_1\cdots e_k$ for $1 \leq k < n$ with $s(e_k) \in Y$. Then
$$wC_K^X(E)w= \left \{\sum_{\substack{0\leq i\leq t_i,\\0\leq j \leq t_j,\\ 0 \leq k < n}}
l_{ijk} c^i \mu_k \mu_k^* c^{-j} \  |  \ l_{ijk} \in K; t_i, t_j \in \N \cup \{0\} \right \}.$$
\end{lema}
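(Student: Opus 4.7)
The plan is to prove both inclusions of the claimed equality, with the reverse inclusion being immediate and the forward inclusion requiring a normal-form argument that exploits the no-exit hypothesis together with the (XCK2) relation at vertices of $X$.

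First I would observe that any element of $wC_K^X(E)w$ is a finite $K$-linear combination of terms of the form $w\alpha\beta^* w$ with $\alpha,\beta \in \mathrm{Path}(E)$ and $r(\alpha)=r(\beta)$. The relations (V) and (E1) force such a term to vanish unless $s(\alpha)=s(\beta)=w$, so I may assume both paths start at $w$. The crucial combinatorial step is then the following: since $c$ has no exits at any vertex $s(e_i)$, and these are the only vertices reachable from $w$, every path starting at $w$ is forced to follow the cycle. Hence $\alpha = c^i\mu_p$ and $\beta = c^j\mu_q$ for some $i,j\geq 0$ and $0\leq p,q<n$. The condition $r(\alpha)=r(\beta)$ becomes $r(\mu_p)=r(\mu_q)$, and because the prefixes $\mu_0,\mu_1,\ldots,\mu_{n-1}$ of a simple cycle end at pairwise distinct vertices, this forces $p=q$. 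Thus every generating monomial has the form $c^i\mu_p\mu_p^*c^{-j}$ for some $0\leq p<n$.

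Next I would reduce the index $p$ using (XCK2). The key identity is: if $1\leq p<n$ and $s(e_p)\in X$, then $\mu_p\mu_p^* = \mu_{p-1}\mu_{p-1}^*$. Indeed, since $c$ has no exits, $s^{-1}(s(e_p))=\{e_p\}$, so (XCK2) at the vertex $s(e_p)\in X$ gives $s(e_p)=e_pe_p^*$; therefore
\[
\mu_p\mu_p^* \;=\; \mu_{p-1}\,e_pe_p^*\,\mu_{p-1}^* \;=\; \mu_{p-1}\,s(e_p)\,\mu_{p-1}^* \;=\; \mu_{p-1}\,r(e_{p-1})\,\mu_{p-1}^* \;=\; \mu_{p-1}\mu_{p-1}^*,
\]
using (E1)/(E2) to absorb $r(e_{p-1})$. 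Iterating this identity, I can replace $p$ by the largest index $k\leq p$ with either $k=0$ or $s(e_k)\in Y$; this is precisely the index set appearing in the claimed spanning set. This establishes the inclusion $\subseteq$.

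For the reverse inclusion I would simply note that for each admissible triple $(i,j,k)$ the element $c^i\mu_k\mu_k^*c^{-j} = (c^i\mu_k)(c^j\mu_k)^*$ is a product of a path and the adjoint of a path, both starting at $w$, and hence lies in $wC_K^X(E)w$. I do not expect any serious obstacle: the only subtlety is carefully justifying that the no-exit assumption cuts down all paths from $w$ to the powers-of-$c$-times-prefix form, and that the restriction $s(e_k)\in Y$ (together with the base case $k=0$) exactly characterizes the monomials that survive after applying (XCK2) as far as possible. Once these two observations are in place, the computation is bookkeeping.
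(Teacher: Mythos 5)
Your proposal is correct and follows essentially the same route as the paper's proof: classify all paths from $w$ as $c^m$ times a prefix of $c$ (using the no-exit hypothesis), observe that $r(\alpha)=r(\beta)$ forces equal prefixes, and then collapse $\mu_p\mu_p^*$ to $\mu_k\mu_k^*$ via (XCK2) at the $X$-vertices. Your write-up of the (XCK2) reduction step is if anything slightly more explicit than the paper's.
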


\begin{proof} Following the same ideas as in the proof of \cite[Lemma 2.2.7]{AAS} we have first that any $\gamma \in {\rm Path}(E)$ such that $s(\gamma)=w$ is of the form $c^m\tau_p$ where $m \in \mathbb{Z}^+$, $\tau_0=w$, $\tau_p=e_1\cdots e_p$ for $1 \leq p < n$, and ${\rm deg}(\gamma)=mn+p$.

Consider $\gamma, \lambda \in {\rm Path}(E)$ with $s(\lambda)=s(\gamma)=w$. Suppose that $s(e_k) \in Y$ and $s(e_{k+1}) \in X,\ldots,s(e_p)\in X$ for $k \leq p$. If ${\rm deg}(\gamma)={\rm deg}(\lambda)$ and $\gamma\lambda^*\neq 0$, we have $\gamma\lambda^* = c^qe_1\cdots e_k e_k^*\cdots e_1^*c^{-q}$. If ${\rm deg}(\gamma) >{\rm deg}(\lambda)$ and $\gamma\lambda^*\neq 0$ then
$\gamma\lambda^* = c^{d+q}e_1\cdots e_k e_k^*\cdots e_1^*c^{-q}$, $d \in \mathbb{N}$. On the other hand, ${\rm deg}(\gamma) <{\rm deg}(\lambda)$ and $\gamma\lambda^*\neq 0$ imply $\gamma\lambda^* = c^{q}e_1\cdots e_k e_k^*\cdots e_1^*c^{-q-d}$, $d \in \mathbb{N}$.

For any $\alpha \in wC_K^X(E)w$, write $\alpha = \sum_{i=1}^r l_i \gamma_i\lambda_i^*$ with $l_i \in K$ and $\gamma_i, \lambda_i \in {\rm Path}(E)$ such that $s(\lambda_i)=s(\gamma_i)=w$ for all $1 \leq i\leq r$. Then, using the computations in the previous paragraph we get the desired result.
\end{proof}

Although we cannot determine whether the corner $wC_K^X(E)w$ given in Lemma \ref{Lem:corner} is isomorphic to some known algebra, we provide below an example of elements in a specific corner.

\begin{example}{\rm
Consider the graph in the picture below with $Y=\{w,v\}$.

\centerline{
\setlength{\unitlength}{2cm}
\begin{picture}(0,0.5)
\put(0,0){\circle*{0.07}}
\put(-1,0){\circle*{0.07}}
\put(-0.35,0.195){$>$}
\put(-1,0){\qbezier(0,0)(1,0.5)(1,0)}
\put(-1,0){\qbezier(0,0)(1,-0.5)(1,0)}
\put(-0.35,-0.29){$<$}
\put(-0.5,0.3){$e_1$}
\put(-0.6,-0.3){$e_2$}
\put(-1.1,0.1){$w$}
\put(0.1,0){$v$}
\end{picture}
}
\vspace{1cm}
Take the cycle $c=e_1e_2$ based at $w$. Notice that elements in $wC_K^X(E)w$ include $c^i(c^*)^j$ (since $v \in Y$), and $c^ie_1e_1^*(c^*)^j$ (since also $w \in Y$) for $i,j \in \N \cup \{0\}$.
}
\end{example}

\begin{proposition}\label{PropReduction} For any nonzero element $\alpha \in C_K^X(E)$ there exist $\mu, \eta \in {\rm Path}(E)$ such that either:
\begin{itemize}
\item[(i)] $0 \neq \mu^*\alpha\eta = ku$, for some $k \in K^{\times}$ and $u \in E^0$, or
\item[(ii)] $0 \neq \mu^*\alpha\eta = k(v-\sum_{e \in s^{-1}(v)} ee^*)$, for some $k \in K^{\times}$ and $v \in Y$, or
\item[(iii)] $0 \neq \mu^*\alpha\eta \in wC_K^X(E)w$, for some cycle without exits based at a vertex $w$.
\end{itemize}
\end{proposition}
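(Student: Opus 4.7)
The plan is to deduce the proposition from the classical reduction theorem for Leavitt path algebras in \cite{AMMS}, by transporting the problem through the isomorphism $\phi\colon C_K^X(E)\to L_K(E(X))$ of Theorem~\ref{relativetoLPA} and then translating the resulting identity back via $\psi=\phi^{-1}$.

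First I would apply the Leavitt reduction theorem to $\phi(\alpha)\neq 0$ in $L_K(E(X))$ to obtain paths $p,q\in{\rm Path}(E(X))$ so that either $p^*\phi(\alpha)q = ku$ for some $u\in E(X)^0 = E^0\cup Y'$ and $k\in K^\times$, or $p^*\phi(\alpha)q$ is a nonzero element of $uL_K(E(X))u$ for $u$ the base of a cycle without exits in $E(X)$. Two structural observations about $E(X)$ will drive the case analysis: the vertices of $Y'$ are sinks of $E(X)$ (primed edges have range in $Y'$ and no edges leave $Y'$), so every path of $E(X)$ is either a path of $E$ or a path of $E$ followed by exactly one terminal primed edge; and any cycle without exits in $E(X)$ is necessarily a cycle of $E$ whose vertices all lie in $X$ (a vertex of $Y$ lying on such a cycle still emits a primed edge in $E(X)$, providing an exit).

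I would then perform the translation case by case. If $u\in E^0\setminus Y$, then $p,q\in{\rm Path}(E)$ and $\psi$ fixes them, so the pullback yields $p^*\alpha q = ku$ directly, giving~(i). If $u\in Y$, one further reduction suffices: if some $e\in s_E^{-1}(u)$ satisfies $r(e)\notin Y$, then $(pe)^*\phi(\alpha)(qe) = kr(e)$ falls into the previous subcase; otherwise every $e\in s_E^{-1}(u)$ has $r(e)\in Y$ and thus yields a primed edge $e'\in E(X)^1$ with $s(e')=u$, and $(pe')^*\phi(\alpha)(qe') = kr(e)'\in Y'$ lands in the next subcase. If $u=v'\in Y'$, using the formulas $\psi(v') = v - \sum_{e\in s^{-1}(v)}ee^*$ and $\psi(e') = e\bigl(v - \sum_{f\in s^{-1}(v)}ff^*\bigr)$ from Theorem~\ref{relativetoLPA}, one chooses $\mu,\eta\in{\rm Path}(E)$ so that $\phi(\mu)^*\phi(\alpha)\phi(\eta) = kv'$, yielding $\mu^*\alpha\eta = k\bigl(v - \sum_e ee^*\bigr)$, which is~(ii). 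Finally, if the outcome lies in a cycle-without-exits corner, then by the second structural observation $u\in X\subset E^0$, both $p,q$ are in ${\rm Path}(E)$, and the underlying cycle is a cycle without exits in $E$ based at $w=u$; $\psi$ then places the pullback in $wC_K^X(E)w$, giving~(iii), whose shape is governed by Lemma~\ref{Lem:corner}.

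I expect the main difficulty to appear in the $Y'$-vertex subcase: $\psi(p)$ for a path $p\in{\rm Path}(E(X))$ ending in a primed edge is not itself an $E$-path but has the form $p_0\bigl(v - \sum_e ee^*\bigr)$ for an underlying $E$-path $p_0$ ending at $v\in Y$. Extracting a clean identity $\mu^*\alpha\eta = k\bigl(v - \sum_e ee^*\bigr)$ with $\mu,\eta\in{\rm Path}(E)$ will therefore demand careful bookkeeping (with $\mu=\eta=v$ sufficing in the simplest instance) and a manipulation exploiting the self-adjoint idempotency of $v-\sum_e ee^*$ together with the explicit formulas of Theorem~\ref{relativetoLPA}.
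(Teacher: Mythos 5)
Your strategy---transporting $\alpha$ to $L_K(E(X))$ via $\phi$, applying the classical reduction theorem there, and pulling back with $\psi$---is not the paper's route: the paper argues directly inside $C_K^X(E)$, first reducing modulo ghost edges to land either in $KE$ or on an element $k(v-\sum_{e\in s^{-1}(v)}ee^*)$ with $v\in Y$, and then inducting on the number of paths in the resulting linear combination. The authors in fact insert a Remark immediately after Proposition~\ref{PropReduction} warning that the statement ``does not follow immediately'' from the Reduction Theorem and Theorem~\ref{relativetoLPA}. Your extra step in the case $u\in Y$ (compressing further by an edge $e$ with $r(e)\notin Y$, or by a primed edge $e'$ otherwise) correctly disposes of the particular obstruction named in that Remark, and your structural observations about $E(X)$ (primed edges are terminal, cycles without exits in $E(X)$ are cycles of $E$ through $X$) are accurate, so cases (i) and (iii) do go through as you describe.

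The genuine gap is the $Y'$ case, which you flag but do not resolve, and it is not mere bookkeeping. If $p^*\phi(\alpha)q=kv'$ with $p=p_0f'$ and $q=q_0g'$, then $\psi(p)=\mu P$ and $\psi(q)=\eta P$, where $\mu=p_0f$ and $\eta=q_0g$ are $E$-paths ending at $v$ and $P=v-\sum_{e\in s^{-1}(v)}ee^*$; pulling back therefore yields only $P\,(\mu^*\alpha\eta)\,P=kP$, i.e.\ the compression of $\mu^*\alpha\eta$ by the idempotent $P$ equals $kP$, which is strictly weaker than the required identity $\mu^*\alpha\eta=kP$. This cannot in general be repaired by a better choice of $E$-paths: take $E$ with one vertex $v\in Y$ emitting two edges $e,f$ into a sink $u$, and $\alpha=v-ee^*=P+ff^*$. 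Then $v'\phi(\alpha)v'=v'$ is a legitimate output of the Leavitt reduction theorem in $L_K(E(X))$, yet the only $E$-path ending at $v$ is $v$ itself and $v\alpha v=\alpha\neq kP$; the proposition holds for this $\alpha$ only through the unrelated reduction $f^*\alpha f=u$ of type (i), which your transport does not produce. Closing the gap thus requires showing that the components of $\mu^*\alpha\eta$ outside the corner $P\,C_K^X(E)\,P$ can always be eliminated, or rerouted into cases (i) or (iii), by further multiplications---an argument of essentially the same substance as the induction the paper performs directly in $C_K^X(E)$, so the reduction to \cite{AMMS} does not save the work it was meant to save.
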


\begin{proof}
We show first that for a nonzero element $\alpha \in C_K^X(E)$, there exist paths $\mu,\eta \in {\rm Path}(E)$ such that
$0 \neq \alpha\eta \in KE$ or $0 \neq \mu^*\alpha\eta = k(v-\sum_{e \in s^{-1}(v)}ee^*)$, for some $k\in K^{\times}$ and $v \in Y$.

Consider a vertex $v\in E^0$ such that $\alpha v\neq 0$. Write $\alpha v= \sum_{i=1}^m \alpha_ie_i^\ast +\alpha'$, with $e_i\in
E^1$, $e_i\neq e_j$ for $i\neq j$ and $\alpha_i, \alpha' \in C_K^X(E)$, $\alpha'$ in only real edges and such that this is a
minimal representation of $\alpha v$ in ghost edges.

If $\alpha ve_i=0$ for every $i\in \{1, \dots, m\}$, then $0=\alpha ve_i= \alpha_i +\alpha' e_i$. Hence $\alpha_i=-\alpha' e_i$, and $\alpha
v=\sum_{i=1}^m -\alpha' e_i e_i^\ast +\alpha'=\alpha' (\sum_{i=1}^m - e_i e_i^\ast +v)\neq 0$. This implies that
$\sum_{i=1}^m - e_i e_i^\ast +v\neq 0$. There are now two cases, depending whether $v\in Y$ or not. Suppose first that $v \in E^0 \setminus Y$. Since $s(e_i)=v$ for every $i$, this means that there exists $f\in E^1$, $f\neq e_i$ for every $i$, with $s(f)=v$. In this case, $\alpha v f =\alpha' f \neq 0$ (because $\alpha '$ is in only real edges), with $\alpha' f$ in only real edges, so we conclude. In the second case, assume that $v \in Y$. Then, multiplying the equation $\alpha
v=\alpha' (\sum_{i=1}^m - e_i e_i^\ast +v)\neq 0$ by $\alpha'^*$, we get that $\alpha'^*\alpha
v=\sum_{i=1}^m - e_i e_i^\ast +v\neq 0$, and we obtain the desired result.

Continue with the case $\alpha ve_i\neq 0$ for some $i$, say for $i=1$. Then $0\neq \alpha v e_1=\alpha_1+\alpha' e_1$, with $\alpha_1+\alpha' e_1$ having
strictly less degree in ghost edges than $\alpha$. Repeating the argument above a finite number of steps we prove our first statement.

\smallskip

Now, consider  $0 \neq \alpha \in C_K^X(E)$. Suppose that there exists a path $\eta \in {\rm Path}(E)$ such that
$\beta:= \alpha\eta \in KE \setminus \{0\}$ (if not, by what is proved above, we are finished). Write $0\ne \beta=\sum_{i=1}^r k_i\beta_i$ as
a linear combination of different paths $\beta_i$ with $k_i\ne 0$ for any $i$. We prove by induction on $r$
that, after multiplying $\beta$ on the left and/or the right, we get a vertex or a element in $wC_K^X(E)w$ for some cycle without exits based at a vertex $w$.

 For $r=1$, if $\beta_1$ has degree $0$ then it is a vertex and we are finished. Otherwise we have
$\beta=k_1\beta_1=k_1 f_1\cdots f_n$, so that $k_1^{-1}f_n^*\cdots f_1^*\beta=v$ where $v=r(f_n)\in E^0$.

Suppose by induction that the property is true for any nonzero element which is a sum of less than $r$ paths in the
conditions above. Write $0\ne \beta=\sum_{i=1}^r k_i\beta_i$ such that $\deg(\beta_i)\le\deg(\beta_{i+1})$ for
any $i$.  If for some $i$ we have $\deg( \beta_i)=\deg(\beta_{i+1})$ then, since $\beta_i\ne\beta_{i+1}$,
there is some path $\mu$ such that $\beta_i=\mu f\nu$ and $\beta_{i+1}=\mu f'\nu'$ where $f,f'\in E^1$ are
different and $\nu,\nu'$ are paths. Thus $0\ne f^*\mu^*\beta$ and we can apply the induction hypothesis to this
element. So we can go on supposing that $\deg(\beta_i)<\deg(\beta_{i+1})$ for each $i$.

We have  $0\ne \beta_1^*\beta=k_1v+\sum_i k_i\gamma_i$, where $v=r(\beta_1)$ and $\gamma_i=\beta_1^*\beta_i$. If some $\gamma_i$ is null then we apply the
induction hypothesis to $\beta_1^*\beta$ and we are done. Otherwise if some $\gamma_i$ does not start (or finish) in $v$ we apply the induction
hypothesis to $v\beta_1^*\beta \ne 0$ (or $\beta_1^*\beta v\ne 0$). Thus we have $$0\ne z:=\beta_1^* \beta=k_1v+\sum_{i=2}^r k_i\gamma_i,$$ where
$0<\deg(\gamma_2)<\cdots< \deg(\gamma_r)$ and all the paths $\gamma_i$ start and finish in $v$.

If $T(v) \cap P_c(E) = \emptyset$ then, by \cite[Lemma 2.2.8]{AAS},there exists a path $\tau$ such that $\tau^*\beta_1^*\beta\tau = \tau^*z\tau = k_1 r(\tau)$ and we are done.

If $T(v) \cap P_c(E) \neq \emptyset$ then there is a path $\rho$ starting at $v$ such that $w=r(\rho)$ is a vertex in a cycle without exits. In this case $0 \neq \rho^*\beta_1^*\beta\rho = \rho^*z\rho \in wC_K^X(E)w$ and the proof is complete.
\end{proof}

\begin{obs}{\rm Notice that the proposition above does not follow immediately from the Reduction Theorem (\cite{AAS}, Theorem 2.2.11) and Theorem~\ref{relativetoLPA}. In fact, if we take $v \in L_K(E(X))$ such that $v \in Y$ then it is already ``reduced'' to a vertex; but if we now apply the isomorphism $\psi:L_K(E(X))\rightarrow C_K^X(E)$ then $\psi(v)= \sum_{e\in s^{-1}(v)} e e^*$, which is not in any ``reduced form'' of Proposition~\ref{PropReduction}.
}
\end{obs}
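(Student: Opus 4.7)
The plan is to substantiate the remark by a direct comparison of forms. First I would set up the situation explicitly: fix any graph $E$, a subset $X\subseteq{\rm Reg}(E)$, and a vertex $v\in Y$. Viewing $v$ as a vertex of $E(X)$, which it still is by Definition~\ref{graphex}, the element $v\in L_K(E(X))$ is already a scalar multiple of a vertex, so the classical Reduction Theorem (\cite{AAS}, Theorem~2.2.11) applied to $v$ produces the trivial conclusion $1\cdot v\cdot 1=v$. Applying the isomorphism $\psi$ from Theorem~\ref{relativetoLPA} yields $\psi(v)=\sum_{e\in s^{-1}(v)}ee^*$ in $C_K^X(E)$, and the point of the remark is that this element matches none of the three forms (i), (ii), (iii) of Proposition~\ref{PropReduction}.

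To verify this I would pull every candidate identity back to $L_K(E(X))$ along $\phi=\psi^{-1}$, using the explicit formulas of Theorem~\ref{relativetoLPA}. Suppose first that $\sum_{e\in s^{-1}(v)}ee^*=ku$ in $C_K^X(E)$ with $k\in K^{\times}$ and $u\in E^0$; then $v=k\phi(u)$ in $L_K(E(X))$, which equals $ku$ if $u\notin Y$ and $k(u+u')$ if $u\in Y$. In the first case $u\neq v$ automatically since $v\in Y$, and in the second case the vertices $u$, $u'$ and $v$ are all distinct in $E(X)^0$, so the linear independence of vertices in $L_K(E(X))$ gives a contradiction in both instances. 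Next suppose $\sum_{e\in s^{-1}(v)}ee^*=k(u-\sum_{f\in s^{-1}(u)}ff^*)$ with $k\in K^{\times}$ and $u\in Y$; since $\phi\bigl(\sum_{f\in s^{-1}(u)}ff^*\bigr)=u$ and $\phi(u)=u+u'$, this identity becomes $v=ku'$, again impossible because $v\in E^0$ while $u'\in Y'$ are disjoint pieces of $E(X)^0$. Finally, suppose $\sum_{e\in s^{-1}(v)}ee^*\in wC_K^X(E)w$ for some vertex $w$ on a cycle without exits; the computation $w\cdot ee^*=\delta_{w,v}\,ee^*$ forces $w=v$, which would require $v$ itself to lie on a cycle without exits. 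For a generic $v\in Y$ this last condition fails, and so the remark is established.

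The argument is not technically hard; the subtle point to track is the distinction between the two orthogonal idempotents $\psi(v)=\sum_{e\in s^{-1}(v)}ee^*$ and $\psi(v')=v-\sum_{e\in s^{-1}(v)}ee^*$, since $\psi$ splits the single vertex $v$ of $L_K(E(X))$ into two pieces of $C_K^X(E)$. This splitting is precisely the reason the naive transfer strategy fails: a vertex, which is the model of a reduced element in the Leavitt setting, is not the image under $\psi$ of a reduced form in the relative Cohn setting, so an independent proof of Proposition~\ref{PropReduction} is needed.
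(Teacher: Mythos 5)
Your argument is correct and is essentially the paper's own justification, which consists only of the observation that $v\in Y$ is already reduced in $L_K(E(X))$ while $\psi(v)=\sum_{e\in s^{-1}(v)}ee^*$ matches none of the forms (i)--(iii); you simply carry out the case-by-case verification (pulling each candidate identity back through $\phi$) that the paper leaves implicit. Two harmless quibbles: in case (i) with $u\in Y$ one may have $u=v$, so ``$u$, $u'$ and $v$ are all distinct'' is not quite right (though linear independence of $v$ and $v'$ still gives the contradiction), and your explicit caveat in case (iii) that $v$ must not lie on a cycle without exits is a point the paper glosses over.
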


Notice that, by Lemma \ref{Lem:corner}, for $X={\rm Reg}(E)$ (that is $C_K^X(E)=L_K(E)$) we obtain the following well-known result: if $c$ is a cycle without exits based at a vertex $w$ then
$$wL_K(E)w = \left \{ \sum_{r=m}^n l_r c^r \ | \ l_r \in K, m,n \in \mathbb{Z} \right \} \cong K[x,x^{-1}].$$
In particular from Proposition \ref{PropReduction} we get the so-called reduction theorem for Leavitt path algebras:

\begin{corolario}\label{Cor:ReductionTheorem} (as in  \cite{AAS}, Theorem 2.2.11) Let $E$ be an arbitrary graph and $K$ any field. For every nonzero element $\alpha \in L_K(E)$ there exist $\mu, \eta \in {\rm Path}(E)$ such that either:
\begin{itemize}
\item[(i)] $0 \neq \mu^*\alpha\eta = ku$, for some $k \in K^{\times}$ and $u \in E^0$, or
\item[(ii)] $0 \neq \mu^*\alpha\eta=p(c)$, for some cycle without exits $c$ and $p(x)$ a nonzero polynomial in $K[x,x^{-1}]$.
\end{itemize}
\end{corolario}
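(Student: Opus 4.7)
The plan is to specialize Proposition \ref{PropReduction} to the case $X = {\rm Reg}(E)$, so that $C_K^X(E) = L_K(E)$ and $Y = {\rm Reg}(E) \setminus X = \emptyset$. In this setting, two of the three possible conclusions of the proposition simplify to yield exactly the two cases of the corollary.

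Given a nonzero $\alpha \in L_K(E)$, I would apply Proposition \ref{PropReduction} to obtain paths $\mu, \eta$ with one of three outcomes. Case (i) of the proposition is identical to case (i) of the corollary, so nothing further is required. Case (ii) of the proposition cannot occur, since it demands $v \in Y = \emptyset$.

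The substantive step is case (iii): $0 \neq \mu^*\alpha\eta \in wL_K(E)w$ for some vertex $w$ based on a cycle without exits $c = e_1\cdots e_n$. Here I would invoke Lemma \ref{Lem:corner}. Because $Y$ is empty, no index $k$ with $1 \leq k < n$ satisfies $s(e_k) \in Y$, so only the $k=0$ summand (with $\mu_0 = w$) contributes to the description of the corner. Hence every element of $wL_K(E)w$ has the form $\sum_{i,j \geq 0} l_{ij}\, c^i (c^*)^j$. Since $c$ has no exits, each $s(e_i)$ is a regular vertex whose only outgoing edge is $e_i$, so (CK2) yields $e_i e_i^* = s(e_i)$; iterating gives $cc^* = c^*c = w$. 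Thus $c$ is invertible in $wL_K(E)w$ with inverse $c^*$, and the corner is isomorphic to $K[x,x^{-1}]$ via $c \mapsto x$. Consequently $\mu^*\alpha\eta = p(c)$ for some nonzero $p \in K[x,x^{-1}]$, which is precisely case (ii) of the corollary.

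There is no real obstacle here: the heavy lifting has already been done in Proposition \ref{PropReduction} and Lemma \ref{Lem:corner}. The only point deserving attention is the verification that, when $Y = \emptyset$, the corner $wL_K(E)w$ collapses to the Laurent polynomial ring, which is immediate from (CK1)--(CK2) together with the no-exit hypothesis on the cycle $c$.
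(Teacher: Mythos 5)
Your proposal is correct and follows essentially the same route as the paper: the authors likewise derive the corollary by specializing Proposition \ref{PropReduction} and Lemma \ref{Lem:corner} to $X={\rm Reg}(E)$ (so $Y=\emptyset$), observing that case (ii) of the proposition is vacuous and that the corner $wL_K(E)w$ collapses to $\{\sum_r l_r c^r\}\cong K[x,x^{-1}]$ because $cc^*=c^*c=w$ for a cycle without exits. Your added verification that only the $\mu_0=w$ summand survives in the lemma and that $c^*$ inverts $c$ via (CK1)--(CK2) is exactly the content the paper leaves implicit.
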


Another immediate consequence from Proposition \ref{PropReduction} is the following result, which we will use in the proof of the Graded Uniqueness Theorem.

\begin{corolario}\label{CorReductionHomogeneous} Let $\alpha$ be a nonzero homogeneous element of $C_K^X(E)$. Then there exist $\mu, \eta \in {\rm Path}(E)$ such that either
\begin{itemize}
\item[(i)] $0 \neq \mu^*\alpha\eta = ku$, for some $k \in K^{\times}$ and $u \in E^0$, or
\item[(ii)] $0 \neq \mu^*\alpha\eta = k(v-\sum_{e \in s^{-1}(v)} ee^*)$, for some $k \in K^{\times}$ and $v \in Y$.
\end{itemize}
In particular, every nonzero graded ideal of $C_K^X(E)$ contains a vertex or a element of the form $v-\sum_{e \in s^{-1}(v)} ee^*$ for some $v \in Y$.
\end{corolario}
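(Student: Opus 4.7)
The plan is to revisit the proof of Proposition \ref{PropReduction} under the additional hypothesis that $\alpha$ is homogeneous, and to observe that in this situation case (iii) of the proposition is never reached. That proof proceeds in two stages, and my task is to show that homogeneity forces us into conclusions (i) or (ii).

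The first stage of the proof of Proposition \ref{PropReduction} reduces $\alpha$ either directly to the conclusion $0 \neq \mu^*\alpha\eta = k(v-\sum_{e\in s^{-1}(v)}ee^*)$ for some $v \in Y$ (which is exactly conclusion (ii) of the corollary, so nothing more is needed), or to a nonzero element $\beta := \alpha\eta \in KE$. In the latter case, since $\alpha$ is homogeneous of some degree $d$ and $\eta$ is a path, $\beta$ is homogeneous of degree $d + |\eta|$; writing $\beta = \sum_{i=1}^r k_i \beta_i$ as a $K$-linear combination of distinct paths, all $\beta_i$ share the same length $d + |\eta|$.

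In the second stage, the proof of Proposition \ref{PropReduction} inducts on $r$ and bifurcates into two sub-cases. The sub-case leading to conclusion (iii) is precisely the one requiring $\deg(\beta_i) < \deg(\beta_{i+1})$ strictly for every $i$, which is impossible since all $\beta_i$ share a common length. Hence only the other sub-case applies: in it, $\beta_i = \mu f \nu$ and $\beta_{i+1} = \mu f' \nu'$ for some paths $\mu,\nu,\nu'$ and distinct edges $f \neq f'$, and multiplying on the left by $(\mu f)^*$ annihilates $\beta_{i+1}$ while producing a nonzero element with strictly fewer terms. The main point to verify is that this reduced element is again a homogeneous linear combination of paths of common length so that the induction can proceed; this holds because $(\mu f)^*$ is of pure degree $-|\mu|-1$ and each surviving $(\mu f)^* \beta_j$ is either zero or a single path of degree $\deg(\beta_j) - |\mu| - 1$. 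Iterating until $r = 1$ and then multiplying on the left by the ghost of the last remaining path yields a nonzero scalar multiple of a vertex, which is conclusion (i).

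For the ``in particular'' statement, let $I$ be a nonzero graded ideal of $C_K^X(E)$. Then $I$ contains some nonzero homogeneous element $\alpha$, and applying the first part of the corollary provides paths $\mu, \eta$ and $k \in K^\times$ such that $\mu^*\alpha\eta$ equals either $ku$ for some $u \in E^0$ or $k(v - \sum_{e \in s^{-1}(v)} ee^*)$ for some $v \in Y$. Since $I$ is a two-sided ideal this element lies in $I$, and multiplying by $k^{-1}$ yields the required vertex or element of the form $v - \sum_{e \in s^{-1}(v)} ee^*$ inside $I$.
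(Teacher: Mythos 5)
Your proof is correct and follows essentially the same route as the paper: both invoke the first stage of the proof of Proposition~\ref{PropReduction} to land either in conclusion (ii) or in a nonzero homogeneous element of $KE$, and then use homogeneity (all paths in the decomposition having equal length) to rule out case (iii). The only difference is in the endgame: where you iterate the equal-degree reduction step of the induction until one term remains, the paper finishes in a single stroke by observing that for distinct paths $\beta_1,\dots,\beta_r$ of equal length one has $\beta_1^*\beta_j=\delta_{1j}\,r(\beta_1)$, so $\beta_1^*\alpha\eta=k_1 r(\beta_1)$ immediately.
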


\begin{proof} By the first part of the proof of Proposition \ref{PropReduction} we have that for a nonzero element $\alpha \in C_K^X(E)$, there exist paths $\mu,\eta \in {\rm Path}(E)$ such that $0 \neq \alpha\eta \in KE$ or $0 \neq \mu^*\alpha\eta = k(v-\sum_{e \in s^{-1}(v)}ee^*)$, for some $k\in K^{\times}$ and $v \in Y$. For the second case it is done. Suppose we are in the first case. Since $\alpha$ is a homogeneous element, $0 \neq \alpha\eta$ is a homogeneous element in $KE$. Now we write $\alpha\eta=\sum_{i=1}^r k_i\beta_i$ with $k_i \in K^{\times}$, $\beta_i \neq \beta_j$ and $|\beta_i|=|\beta_j|$ for all $i \neq j$. Therefore $\beta_1^*\alpha\eta=k_1r(\beta_1)$ and we complete the proof.

The particular statement follows immediately.
\end{proof}

We are now in position to show the uniqueness theorems for relative Cohn path algebras.

\begin{theorem}{\rm \textbf{(The Graded Uniqueness Theorem)}}\label{gradeduniqueness}  Consider $A$ a $\mathbb{Z}$-graded ring and $\pi: C_K^X(E) \rightarrow A$ a graded ring homomorphism. Suppose that $\pi(u)\neq 0$ for every vertex $u \in E^0$ and $\pi(v-\sum_{e \in s^{-1}(v)} ee^*)\neq 0$ for every vertex $v \in Y$. Then $\pi$ is injective.
\end{theorem}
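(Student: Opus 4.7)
The plan is to reduce the injectivity statement to the ``in particular'' conclusion of Corollary~\ref{CorReductionHomogeneous} by showing that the kernel of $\pi$ is a graded ideal, and then rule out the two possible nonzero elements it would have to contain.

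First I would set $I := \ker \pi$ and verify that $I$ is a graded ideal of $C_K^X(E)$. Given $\alpha \in I$, write $\alpha = \sum_{n \in \mathbb{Z}} \alpha_n$ with $\alpha_n \in C_K^X(E)_n$ as in the $\mathbb{Z}$-grading recalled in Section~\ref{Sec:Prelimin}. Since $\pi$ is a graded ring homomorphism, $\pi(\alpha_n)$ lies in the degree-$n$ homogeneous component of $A$, and the equality $0 = \pi(\alpha) = \sum_n \pi(\alpha_n)$ forces each $\pi(\alpha_n) = 0$ by the uniqueness of the decomposition into homogeneous components in $A$. Hence $\alpha_n \in I$ for every $n$, so $I$ is graded.

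Next, suppose for contradiction that $I \neq 0$. By the particular statement of Corollary~\ref{CorReductionHomogeneous}, every nonzero graded ideal of $C_K^X(E)$ contains either some vertex $u \in E^0$ or an element of the form $v - \sum_{e \in s^{-1}(v)} ee^*$ for some $v \in Y$. In either case the presence of such an element in $I$ contradicts one of the two standing hypotheses on $\pi$: $\pi(u) \neq 0$ for all $u \in E^0$, and $\pi(v - \sum_{e \in s^{-1}(v)} ee^*) \neq 0$ for all $v \in Y$. Therefore $I = 0$ and $\pi$ is injective.

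The main (and essentially only) work is the grading argument for $\ker \pi$; the rest is a direct invocation of Corollary~\ref{CorReductionHomogeneous}. I do not foresee a significant obstacle here, since everything hinges on having already established a version of the reduction theorem strong enough to produce a \emph{homogeneous} reduced element (either a vertex, or a difference of the form $v - \sum_{e \in s^{-1}(v)} ee^*$), which is precisely what makes the graded hypothesis on $\pi$ sufficient without having to control any cycle-without-exits case.
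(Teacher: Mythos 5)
Your proposal is correct and follows essentially the same route as the paper's own proof: observe that $\ker\pi$ is a graded ideal and invoke the ``in particular'' statement of Corollary~\ref{CorReductionHomogeneous} to conclude that a nonzero kernel would contain a vertex or an element $v-\sum_{e\in s^{-1}(v)}ee^*$ with $v\in Y$, contradicting the hypotheses. The only difference is that you spell out the standard argument that the kernel of a graded homomorphism is graded, which the paper simply asserts.
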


\begin{proof} Notice that ${\rm Ker}(\pi)$ is a graded ideal of $C_K^X(E)$. Then by Corollary~\ref{CorReductionHomogeneous}, ${\rm Ker}(\pi)$ is either $\{0\}$, or contains a vertex, or contains a element of the form $v-\sum_{e \in s^{-1}(v)} ee^*$ for some $v \in Y$. By the hypothesis, the only option is ${\rm Ker}(\pi)=\{0\}$.
\end{proof}

\begin{theorem}{\rm \textbf{(The Cuntz-Krieger Uniqueness Theorem)}} Consider $\pi: C_K^X(E) \rightarrow A$ a ring homomorphism. Suppose that the graph $E$ satisfies Condition (L), that $\pi(u)\neq 0$ for every vertex $u \in E^0$, and $\pi(v-\sum_{e \in s^{-1}(v)} ee^*)\neq 0$ for every vertex $v \in Y$. Then $\pi$ is injective.
\end{theorem}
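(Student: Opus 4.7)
The plan is to deduce this directly from the Reduction Theorem (Proposition \ref{PropReduction}), using Condition (L) to rule out the troublesome case (iii). I would argue by contrapositive: suppose $\pi$ is not injective, so pick a nonzero element $\alpha \in \ker(\pi)$, and show that one of the hypotheses on $\pi$ must be violated.

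Applying Proposition \ref{PropReduction} to $\alpha$, I obtain paths $\mu, \eta \in {\rm Path}(E)$ placing $\mu^*\alpha\eta$ in one of three forms. In case (i), $\mu^*\alpha\eta = ku$ for some $k \in K^\times$ and $u \in E^0$; applying $\pi$ and using that $\alpha \in \ker(\pi)$ forces $k\pi(u) = 0$, so $\pi(u)=0$, contradicting the hypothesis. In case (ii), $\mu^*\alpha\eta = k(v - \sum_{e \in s^{-1}(v)} ee^*)$ with $v \in Y$ and $k \in K^\times$; the same reasoning yields $\pi(v - \sum_{e \in s^{-1}(v)} ee^*) = 0$, again contradicting the hypothesis. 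Case (iii) would place $\mu^*\alpha\eta$ in $wC_K^X(E)w$ for some cycle without exits based at $w$; but since $E$ satisfies Condition (L), every cycle in $E$ has an exit, so no such cycle exists and case (iii) is vacuous.

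Since each of the three possibilities provided by the Reduction Theorem leads to a contradiction, the kernel must be zero, so $\pi$ is injective. The key insight making this short is that the heavy lifting has already been done in Proposition \ref{PropReduction}: the role of Condition (L) here is precisely to collapse the three-case conclusion of the Reduction Theorem into only cases (i) and (ii), which are controlled by the two explicit non-vanishing hypotheses on $\pi$. The only mild subtlety is recognizing that both types of ``obstruction elements'' ($u \in E^0$ and $v - \sum_{e \in s^{-1}(v)} ee^*$ for $v \in Y$) appear naturally in the relative setting, whereas in the pure Leavitt case (Corollary \ref{Cor:ReductionTheorem}) only the vertex-type obstruction is needed.
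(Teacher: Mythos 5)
Your proof is correct and follows essentially the same route as the paper: take a nonzero element of $\ker(\pi)$, apply Proposition \ref{PropReduction}, note that Condition (L) eliminates case (iii), and use that $\ker(\pi)$ is an ideal to contradict the non-vanishing hypotheses in cases (i) and (ii). The only cosmetic point is that, since $\pi$ is merely a ring homomorphism, one should pass from $ku\in\ker(\pi)$ to $u=(k^{-1}u)(ku)\in\ker(\pi)$ rather than dividing by the scalar, but this is immediate and the paper glosses over it in the same way.
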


\begin{proof} We use that ${\rm Ker}(\pi)$ is an ideal of $C_K^X(E)$. Let $\alpha$ be a nonzero element in ${\rm Ker}(\pi)$. Since $E$ satisfies Condition (L) then, by Proposition~\ref{PropReduction}, there exist $\mu, \eta \in {\rm Path}(E)$ such that either $0 \neq \mu^*\alpha\eta = ku$, for some $k \in K^{\times}$ and $u \in E^0$, or $0 \neq \mu^*\alpha\eta = k(v-\sum_{e \in s^{-1}(v)} ee^*)$, for some $k \in K^{\times}$ and $v \in Y$. Therefore ${\rm Ker}(\pi)$ either contains a vertex, or contains a element of the form $v-\sum_{e \in s^{-1}(v)} ee^*$ for some $v \in Y$, what contradicts the hypothesis of the theorem. Therefore ${\rm Ker}(\pi)=\{0\}$.
\end{proof}

As a consequence of the isomorphism between relative Cohn path algebras and Leavitt path algebras we obtain other uniqueness theorem. For this aim we previously define:

\begin{definition}{\rm Let $E$ be a graph. We say that $E$ satisfies \emph{Relative Condition (L)} if every cycle $c=e_1\cdots e_n$ such that $s(e_i) \notin Y$ for every $i=1,\ldots,n$, has an exit.
}
\end{definition}

\begin{theorem}{\rm \textbf{(The Relative Cuntz-Krieger Uniqueness Theorem)}} Consider $\pi: C_K^X(E) \rightarrow A$ a ring homomorphism. Suppose that the graph $E$ satisfies Relative Condition (L) and that:
 \begin{itemize}
 \item[(i)] $\pi(u)\neq 0$ for every vertex $u \notin Y$,
 \item[(ii)] $\pi(v-\sum_{e \in s^{-1}(v)} ee^*)\neq 0$ for every vertex $v \in Y$, and
 \item[(iii)] $\pi(\sum_{e \in s^{-1}(v)} ee^*)\neq 0$ for every vertex $v \in Y$.
 \end{itemize}
  Then $\pi$ is injective.
\end{theorem}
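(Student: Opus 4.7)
My plan is to reduce the statement to the (already proven) Cuntz-Krieger Uniqueness Theorem, applied to the Leavitt path algebra $L_K(E(X))$ through the isomorphism $\phi\colon C_K^X(E)\to L_K(E(X))$ of Theorem \ref{relativetoLPA} (with inverse $\psi$). Setting $\tilde\pi:=\pi\circ\psi\colon L_K(E(X))\to A$, injectivity of $\tilde\pi$ is equivalent to that of $\pi$. Viewing $L_K(E(X))$ as the relative Cohn path algebra $C_K^{\mathrm{Reg}(E(X))}(E(X))$ (so the relevant ``$Y$'' is empty and the $v-\sum ee^*$ hypothesis becomes vacuous), I would apply the Cuntz-Krieger Uniqueness Theorem to $\tilde\pi$; for this I need two things: that $E(X)$ satisfies Condition (L), and that $\tilde\pi$ is nonzero on every vertex of $E(X)$.

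The non-vanishing on vertices will be immediate from the explicit formulas for $\psi$ in Theorem \ref{relativetoLPA}, splitting $E(X)^0=E^0\cup Y'$ into three cases. For $u\in E^0\setminus Y$, $\tilde\pi(u)=\pi(u)\neq 0$ by hypothesis (i); for $u\in Y$, $\tilde\pi(u)=\pi\bigl(\sum_{e\in s^{-1}(u)}ee^*\bigr)\neq 0$ by hypothesis (iii); and for $u=v'\in Y'$, $\tilde\pi(v')=\pi\bigl(v-\sum_{e\in s^{-1}(v)}ee^*\bigr)\neq 0$ by hypothesis (ii). Notice this is precisely why the paper introduced the extra condition (iii), not present in the previous uniqueness results.

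The main obstacle will be deducing Condition (L) for $E(X)$ from Relative Condition (L) for $E$. First I would observe that every $v'\in Y'$ is a sink in $E(X)$: the added edges $e'$ have $s_{E(X)}(e')=s(e)\in E^0$ and $r_{E(X)}(e')=r(e)'\in Y'$, so no edge of $E(X)$ has its source in $Y'$. Consequently any cycle of $E(X)$ uses only edges of $E^1$ and is therefore a cycle of $E$. Given such a cycle $c=e_1\cdots e_n$, if it already has an exit in $E$, that same edge serves as an exit in $E(X)$. Otherwise $s^{-1}(s(e_i))=\{e_i\}$ for every $i$, and the contrapositive of Relative Condition (L) guarantees some index $i$ with $s(e_i)\in Y$. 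Then $r(e_{i-1})=s(e_i)\in Y$ (indices modulo $n$), so by Definition \ref{graphex} the new edge $e_{i-1}'$ lies in $E(X)^1$; since $s(e_{i-1}')=s(e_{i-1})$ lies on $c$ and $e_{i-1}'\neq e_{i-1}$, it is an exit for $c$ in $E(X)$.

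With both hypotheses of the Cuntz-Krieger Uniqueness Theorem verified for $\tilde\pi$, I would conclude that $\tilde\pi$, and therefore $\pi$, is injective.
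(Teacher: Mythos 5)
Your proposal is correct and follows essentially the same route as the paper: transport $\pi$ to $L_K(E(X))$ via $\psi$, verify non-vanishing on the three types of vertices of $E(X)$ using hypotheses (i)--(iii), and invoke the Cuntz--Krieger Uniqueness Theorem. The only difference is that you spell out why $E(X)$ satisfies Condition~(L) (the $v'$ are sinks, so cycles of $E(X)$ are cycles of $E$, and a cycle with no exit in $E$ passes through some $v\in Y$, producing the exit $e'$), a step the paper dismisses as ``straightforward''; your verification of it is accurate.
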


\begin{proof} It is straightforward that the corresponding graph $E(X)$ satisfies Condition (L). Consider the isomorphism $\psi: L_K(E(X)) \rightarrow C_K^X(E)$ given in Theorem \ref{relativetoLPA}. We have that $\pi \circ \psi: L_K(E(X)) \rightarrow A$ is a ring homomorphism. Moreover, if $u \notin Y$ then $\pi \circ \psi(u)= \pi(\psi(u))= \pi(u) \neq 0$; if $v \in Y$ then $\pi \circ \psi(v)= \pi(\psi(v))= \pi(\sum_{e \in s^{-1}(v)} ee^*) \neq 0$ and $\pi \circ \psi(v')= \pi(\psi(v'))= \pi(v-\sum_{e \in s^{-1}(v)} ee^*) \neq 0$. By the Cuntz-Krieger Uniqueness Theorem (\cite[Theorem 2.2.16]{AAS}) we have that $\pi \circ \psi$ is injective. Therefore $\pi$ is injective.
\end{proof}

\section{Relative Cohn path algebras as partial skew group rings} \label{section2}

Given a graph $E$ and a subset $X$ of $\text{Reg}(E)$, in this section we describe the associated relative Cohn path algebra as the partial skew group ring associated to a partial action, of the free group on the edges of the graph, on the ``relative algebraic boundary path space". We use this characterization to relate dynamical properties of the action with combinatorial properties of the graph.

Since each relative Cohn path algebra is isomorphic to a Leavitt path algebra (see Theorem~\ref{relativetoLPA}), and in \cite{rg1,GR48} each Leavitt path algebra was realized as a partial skew group ring, we obtain a characterization of relative Cohn path algebras as partial skew group rings by composing isomorphisms. However, the partial skew group ring obtained this way is formed by a partial action of the free group on edges of $E(X)$, so that some ``unnatural" edges $e'$ appear on the free group. Furthermore, the boundary path space will also include paths containing edges of $E(X)$. We believe that the picture we present is the correct one for dealing with relative graphs, as we have an action of the free group on the edges and a ``relative boundary path space" that only involves edges and vertices of $E$. In fact, the ``relative algebraic boundary path space" of a graph is the space of infinite paths in the graph union with finite paths ending in a sink or in a vertex in $Y$. The precise definition follows below (compare it with the analytical counterpart, see \cite{toke}).

\begin{definition}{\rm
Let $E$ be a graph and $X$ be a subset of $\text{Reg}(E)$. Recall that $Y=\text{Reg}(E)\setminus X$. The \emph{algebraic relative boundary path space} $\partial_X E$ is defined by
$$ \partial_X(E) = E^\infty \cup \{\xi \in {\rm Path}(E):r(\xi)\text{ is a sink } \} \cup \{\xi \in {\rm Path}(E):r(\xi)\in Y \}.$$
}
\end{definition}

We denote by $\F$ the free group generated by the set $E^1$. Next we define a partial action $(\{U_c\}_{c\in \F},\{\theta_c\}_{c\in \F})$ of $\F$ on $\partial_X(E)$. The sets and maps are defined as follows:

Let $W=\cup_{n\in \N}E^n$ be the set of all finite paths of length greater or equal to one (as a subset of $\F$), and define:

\begin{itemize}
\item $U_0:=\partial_X(E)$, where $0$ is the neutral element of $\F$.

\item $U_{b^{-1}}:=\{\xi\in \partial_X(E): s(\xi)=r(b)\},$ for all $b\in W$.

\item $U_a:=\{\xi\in \partial_X(E): \xi_1\xi_2...\xi_{|a|}=a\},$ for all $a\in W$.

\item $U_{ab^{-1}}:=U_a,$ for $ab^{-1}\in \F$ with $a,b\in W$, $r(a)=r(b)$ and $ab^{-1}$ in its reduced form (that is, $a_{|a|}\neq b_{|b|}$).

\item $U_c:=\emptyset$, for all other $c \in \F$.

\end{itemize}

Furthermore, let $$U_v=\{\xi\in \partial_X(E):s(\xi)=v\}, \text{ for all } v\in E^0.$$

\begin{obs}{\rm \label{need for injectivity} Note that $v\in U_v$ if, and only if, $v$ is a sink or $v\in Y$.  Moreover, if $v=r(b)$ is a sink then $U_{b^{-1}}=\{r(b)\}$ and $U_b=\{b\}$. Notice also that if $v\in X$ then $U_v=\displaystyle \bigcup_{s(a)=v} U_{a}$.
}
\end{obs}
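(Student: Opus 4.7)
The plan is to verify each of the three assertions by carefully unwinding the definitions of $U_v$, $U_a$, $U_{b^{-1}}$, and of the relative boundary path space $\partial_X(E)$. Nothing beyond this bookkeeping is needed, since the remark is essentially a compatibility statement between the sets $U_c$ and the ``shape'' of $\partial_X(E)$.

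For the first assertion, I would note that a vertex $v$ is a path of length zero with $s(v)=r(v)=v$, so the condition $s(\xi)=v$ in the definition of $U_v$ is automatic when $\xi=v$. Therefore the question reduces to whether $v$ itself lies in $\partial_X(E)$. Inspecting the three pieces that form $\partial_X(E)$, a length-zero path $v$ appears neither in $E^\infty$ nor in $\{\xi\in{\rm Path}(E):r(\xi)\text{ is a sink}\}$ unless $v$ is itself a sink, and appears in $\{\xi\in{\rm Path}(E):r(\xi)\in Y\}$ precisely when $v\in Y$. This gives the equivalence.

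For the second assertion, suppose $v=r(b)$ is a sink. Then $s^{-1}(v)=\emptyset$, so the only finite or infinite path starting at $v$ is the length-zero path $v$; combined with the first assertion (a sink lies in $U_v$), this forces $U_{b^{-1}}=\{\xi\in\partial_X(E):s(\xi)=v\}=\{v\}=\{r(b)\}$. For $U_b$, any $\xi$ whose initial segment of length $|b|$ equals $b$ must, after $b$, either stop or continue with an edge out of $v$; the latter is impossible, and the former gives $\xi=b$. Since $b$ terminates at a sink, $b\in\partial_X(E)$, so $U_b=\{b\}$.

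For the third assertion the inclusion $\bigcup_{s(a)=v}U_a\subseteq U_v$ is immediate from the definitions, since every element of $U_a$ begins with $a$ and therefore has source $s(a)=v$. For the reverse inclusion, take $\xi\in U_v$, so $s(\xi)=v$. Because $v\in X\subseteq{\rm Reg}(E)$, $v$ is neither a sink nor an element of $Y$, hence by the first assertion $v\notin U_v$; this rules out $\xi=v$ and forces $|\xi|\geq 1$. Then $\xi$ has a legitimate first edge $\xi_1\in E^1$ with $s(\xi_1)=v$, so $\xi\in U_{\xi_1}$ with $\xi_1\in W$. The only subtle point is making sure $v\in X$ is actually used to exclude the degenerate possibility $\xi=v$; this is precisely the content of the first assertion, which is why the three observations are bundled into a single remark.
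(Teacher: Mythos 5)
Your proposal is correct and follows exactly the (implicit) argument the paper intends: the remark is stated without proof precisely because it is a direct unwinding of the definitions of $\partial_X(E)$, $U_v$, $U_a$ and $U_{b^{-1}}$, which is what you carry out. In particular you correctly isolate the one point of substance --- that for $v\in X$ the length-zero path $v$ does not lie in $\partial_X(E)$, since $v$ is neither a sink nor in $Y$ --- which is what makes the decomposition $U_v=\bigcup_{s(a)=v}U_a$ hold.
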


Next we define the maps $\theta_c:U_{c^{-1}}\rightarrow U_c$. Let $\theta_{0}:U_{0}\rightarrow U_{0}$ be the identity map. For $b\in W$, let  $\theta_b:U_{b^{-1}}\rightarrow U_b$ be the ``add b" or ``creation" map, that is, if $\xi\in U_{b^{-1}}$ then $\theta_b(\xi)=b\xi$ (we are assuming here that $br(b)=b$). 
The inverse of $\theta_b$ is given by the ``erase b" map, that is, $\theta_{b^{-1}}:U_b\rightarrow U_{b^{-1}}$ is given by
$\theta_{b^{-1}}(\eta)= \eta_{|b|+1}\eta_{|b|+2}...$ if $r(b)$ is not a sink and $r(b)\notin Y$, and $\theta_{b^{-1}}(b)=r(b)$, if $r(b)$ is a sink or $r(b)\in Y$. Finally, for $a,b\in W$ with $r(a)=r(b)$ and $a_{|a|}\neq b_{|b|}$ we define $\theta_{ab^{-1}}:U_{ba^{-1}}\rightarrow U_{ab^{-1}}$ as the ``erase b and add a" map, that is, $\theta_{ab^{-1}}(\xi)=a\xi_{(|b|+1)}\xi_{(|b|+2)}...$.

\begin{example}{\rm
Consider the graph below and take $Y=\{r(f_1)\}$.

\centerline{
\setlength{\unitlength}{1.5cm}
\begin{picture}(0,1)
\put(0,0){\circle*{0.08}}
\put(-1,0.5){\circle*{0.08}}
\put(-1,-0.5){\circle*{0.08}}
\put(-1,0){\circle*{0.08}}
\put(-1,0){\line(1,0){1}}
\put(-0.6,-0.065){$>$}
\put(-0.6,-0.3){$f_3$}
\put(0,0){\qbezier(0,0)(0.9,0.8)(1,0)}
\put(0,0){\qbezier(0,0)(0.9,-0.8)(1,0)}
\put(0.6,0.33){$>$}
\put(-1,0.5){\qbezier(0,0)(1,0.2)(1,-0.5)}
\put(-1,-0.5){\qbezier(0,0)(1,-0.1)(1,0.5)}
\put(-0.7,0.47){$>$}
\put(-0.7,-0.575){$<$}
\put(-0.7,0.7){$f_1$}
\put(-0.7,-0.8){$f_4$}
\put(0.5,0.55){$f_2$}
\end{picture}}
\vspace{1cm}
We have, for example, $$U_{f_2(f_3)^{-1}}=U_{f_2}=\{f_2, f_2 f_4, f_2f_2, f_2f_2f_4\cdots f_2f_2f_2\cdots\}.$$ Also, since $r(f_1)\in Y$ we have that $r(f_1)\in U_{r(f_1)}$ and hence $U_{r(f_1)}$ contains, but is not equal to $U_{f_2}\cup U_{f_4}$.
}
\end{example}

The set partial action defined above induces a partial action in the algebra level (for more details about the relations between partial actions on sets and partial actions of algebras see \cite{Vivi} and \cite{dg}). For each $c\in \F$, with $U_c\neq\emptyset$, let $F(U_c)$ be the $K$-algebra of functions from $U_c$ to $K$. Note that $F(U_c)$ may be identified with the subset of the functions in $F(\partial_X E)$ that vanishes outside of $U_c$. Furthermore, each $F(U_c)$ is an ideal of the $K$-algebra $F(\partial_X E)$. Now, for each $c\in\F$, define $\alpha_c:F(U_{c^{-1}})\rightarrow F(U_c) $ by $\alpha(f) = f\circ \theta_{c^{-1}}$, which is an $K$-isomorphism. One can now check that the family $\{\{\alpha_c\}_{c\in \F}, \{F(U_c)\}_{c\in \F}\}$ is a partial action of $\F$ on $F(\partial_X E)$.

To obtain the relative Cohn path algebra we need to consider the following restriction of the above partial action:
for each $c\in \F$, and for each $v\in E^0$, define the characteristic maps $1_c:=\chi_{U_c}$ and $1_v:=\chi_{U_v}$. Finally, let $$D(\partial_X(E))=D_0={\rm span}\{\{1_p:p\in \F\setminus\{0\}\}\cup\{1_v:v\in E^0 \}\},$$  (where span means the $K$-linear span) and, for each $p\in \F\setminus\{0\}$, let $D_p\subseteq F(U_p)$ be defined as $1_p D_0$, that is, $$D_p={\rm span}\{\{1_p1_q:q\in \F\}\}.$$
Since $\alpha_p(1_{p^{-1}}1_q)=1_p1_{pq}$ (see \cite{rg1}), consider, for each $p\in \F$, the restriction of $\alpha_p$ to $D_{p^{-1}}$. Notice that $\alpha_{p}:D_{p^{-1}}\rightarrow D_p$ is an isomorphism of $K$-algebras and, furthermore, $\{\{\alpha_p\}_{p\in \F}, \{D_p\}_{p\in \F}\}$ is a partial action. Denote by $D(\partial_X(E))\rtimes_\alpha\F$ the partial skew group ring associated to it.

\begin{theorem} Let $E$ be a graph and $X$ be any subset of ${\rm Reg}(E)$. There exists a $K$-algebra isomorphism $\varphi$, from $C_K^X(E)$ onto $D(\partial_X(E))\rtimes_\alpha\F,$ such that $\varphi(e)=1_e\delta_e$, $\varphi(e^*)=1_{e^{-1}}\delta_{e^{-1}}$, for all $e\in E^1$, and $\varphi(v)=1_v\delta_0$, for all $v\in E^0$.
\end{theorem}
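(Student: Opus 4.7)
The plan is to construct $\varphi$ via the universal property of $C_K^X(E)$ and then use the Graded Uniqueness Theorem (Theorem~\ref{gradeduniqueness}) for injectivity, handling surjectivity by a direct monomial computation.

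First, I would verify that the proposed images $\varphi(v)=1_v\delta_0$, $\varphi(e)=1_e\delta_e$ and $\varphi(e^*)=1_{e^{-1}}\delta_{e^{-1}}$ satisfy the relations (V), (E1), (E2), (CK1) and (XCK2) of Definition~\ref{Def:relativeCohn}. Relations (V)--(E2) amount to short computations in the partial skew group ring using the inclusions $U_e\subseteq U_{s(e)}$ and the identity $U_{e^{-1}}=U_{r(e)}$, and (CK1) follows from the observation that distinct edges give disjoint $U_e$, $U_f$, so $1_e1_f=0$ for $e\neq f$, combined with $\alpha_e(1_{e^{-1}})=1_e$. For (XCK2), a direct computation yields $(1_e\delta_e)(1_{e^{-1}}\delta_{e^{-1}})=1_e\delta_0$ for every $e\in E^1$; summing over $e\in s^{-1}(v)$ and using, for $v\in X$, the identity $U_v=\bigsqcup_{e\in s^{-1}(v)}U_e$ recorded in Remark~\ref{need for injectivity}, one gets exactly $1_v\delta_0=\varphi(v)$. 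The universal property of $C_K^X(E)$ then produces the desired $K$-algebra homomorphism $\varphi$.

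Next, I would equip $D(\partial_X(E))\rtimes_\alpha\F$ with its natural $\Z$-grading, where $f\delta_{ab^{-1}}$ has degree $|a|-|b|$; under this grading $\varphi$ is graded (vertices in degree $0$, edges in degree $+1$, ghost edges in degree $-1$). To invoke Theorem~\ref{gradeduniqueness} I must check that $\varphi(u)\neq0$ for every $u\in E^0$ and $\varphi\bigl(v-\sum_{e\in s^{-1}(v)}ee^*\bigr)\neq 0$ for every $v\in Y$. For the first, I claim $U_u\neq\emptyset$: if $u$ is a sink or $u\in Y$ then $u\in U_u$; otherwise one follows edges forward from $u$, producing either a finite path to a sink or a $Y$-vertex or, if no such termination occurs, an infinite path starting at $u$, giving an element of $U_u$ in every case. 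For the second, the computation above gives
$$\varphi\Bigl(v-\sum_{e\in s^{-1}(v)}ee^*\Bigr)=\Bigl(1_v-\sum_{e\in s^{-1}(v)}1_e\Bigr)\delta_0,$$
and since $v\in U_v$ (because $v\in Y$) while $v\notin U_e$ for any $e\in s^{-1}(v)$, this function equals $\chi_{\{v\}}\delta_0\neq 0$. Theorem~\ref{gradeduniqueness} then yields injectivity of $\varphi$.

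For surjectivity, I would prove by induction on $|a|+|b|$ that $\varphi(\alpha\beta^*)=1_{\alpha\beta^{-1}}\delta_{\alpha\beta^{-1}}$ for every pair of paths $\alpha,\beta\in{\rm Path}(E)$ with $r(\alpha)=r(\beta)$ (including the degenerate cases where $\alpha$ or $\beta$ is a vertex), using the multiplication rule of the partial skew group ring together with the already established relations. Since by definition $D(\partial_X(E))$ is the $K$-linear span of $\{1_v:v\in E^0\}\cup\{1_p:p\in\F\setminus\{0\}\}$ and every nonzero $1_p$ has the form $1_{\alpha\beta^{-1}}$, these monomial images generate $D(\partial_X(E))\rtimes_\alpha\F$ as a $K$-algebra, and surjectivity follows. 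The main obstacle is the verification of (XCK2), as this is precisely where the hypothesis $v\in X$ enters through the equality $U_v=\bigsqcup_{e\in s^{-1}(v)}U_e$ (which fails for $v\in Y$, mirroring the absence of the relation in the algebra); once this is handled, the graded uniqueness theorem bypasses the delicate direct injectivity argument one would otherwise need.
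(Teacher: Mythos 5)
Your proposal is correct and follows essentially the same route as the paper's proof: existence of $\varphi$ via the universal property, injectivity via the Graded Uniqueness Theorem using the same grading and the same two nonvanishing checks (in particular the observation that $v\in U_v$ but $v\notin U_e$ for $v\in Y$), and surjectivity from the monomials $1_{\alpha\beta^{-1}}\delta_{\alpha\beta^{-1}}$. The only difference is that you spell out details (the relation checks and the induction for surjectivity) that the paper delegates to the earlier work \cite{rg1}.
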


\begin{proof} Consider the sets $\{1_e \delta_e, 1_{e^{-1}} \delta_{e^{-1}}: e\in E^1\}$ and $\{1_v\delta_0: v\in E^0\}$ in $ D(\partial_X(E))\rtimes_\alpha \F$. Proceeding as in \cite{rg1} one can check that these sets satisfy the relations defining the relative Cohn path algebra and hence, by the universal property of $C_K^X(E)$, we obtain the desired homomorphism $\varphi: C_K^X(E) \rightarrow D(\partial_X(E))\rtimes_\alpha\F$, such that, for all $e \in E^1$ and all $v\in E^0$, $\varphi(e)=1_e\delta_e$, $\varphi(e^*)=1_{e^{-1}}\delta_{e^{-1}}$ and $\varphi(v)=1_v\delta_0$.

To show that $\varphi$ is injective we will use the Graded Uniqueness Theorem (Theorem~\ref{gradeduniqueness}). So we need to define a $\Z$-grading in $D(\partial_X(E))\rtimes_\alpha \F$. This is done as in \cite{rg1} and hence we just sketch the steps:
For each $p\in \F$, let $|p|:=m-n$, where $m$ is the number of generators (elements of $E^1$) of $p$ and $n$ is the number of inverses of generators of $p$. Define, for each $z\in \Z$, $A_z\subseteq D(\partial_X(E))\rtimes_\alpha\F$ as the $K$-linear span of $\{a_p\delta_p:a_p\in D_p \text{ and } |p|=z\}$. Then $\{A_z\}_{z\in \Z}$ is a $\Z$-grading of $D(\partial_X(E))\rtimes_\alpha\F$. Remember $C_K^X(E)$ is a $\Z$ graded $K-$algebra, with the grading induced by the length of the paths. Now, for each  $ab^*\in C_K^X(E)$ with $|a|-|b|=z$, we have that $\varphi(ab^*)\in D_{ab^{-1}}\delta_{ab^{-1}}$. Since $|ab^{-1}|=|a|-|b|=z$ then $D_{ab^{-1}}\delta_{ab^{-1}}\subseteq A_z$, and hence $\varphi$ is a $\Z$ graded isomorphism.

To apply the Graded Uniqueness Theorem (Theorem~\ref{gradeduniqueness}), we still need to check that $\varphi(v) \neq 0$ for all $v\in E^0$ (which is straightforward since $\varphi(v)= 1_v \delta_0$ and $U_v\neq \emptyset$ for all $v\in E^0$), and that $\pi(v-\sum_{e \in s^{-1}(v)} ee^*)\neq 0$ for every vertex $v \in Y$. Notice that if $v \in Y$ then, by Remark~\ref{need for injectivity}, $v \in U_v$. On the other hand $v\notin U_e$ for any $e\in s^{-1}(v)$ (since any element in $U_e$ has length at least one). Therefore $1_v \neq \sum_{e\in s^{-1}(v)} 1_e$ and hence $\pi(v-\sum_{e \in s^{-1}(v)} ee^*) = 1_v - \sum_{e\in s^{-1}(v)} 1_e \neq 0$. We conclude that $\varphi$ is injective.

The proof that $\varphi$ is surjective is identical to the proof given in \cite[Theorem~3.3]{rg1}.
\end{proof}

The interplay between combinatorial and algebraic objects is a driving force in the study of Leavitt path algebras and in other areas of Mathematics (see \cite{KK} for an example of this interplay out of Leavitt path algebras theory). Usually to make the connection between a combinatorial property of a graph (for example), and an algebra associated to it, one builds an intermediate dynamical system with properties that model the algebraic and combinatorial aspects under study. This is the case at hand. In our setting, the combinatorial object is composed by a graph and a subset $X$ of the regular vertices, and the algebra associated to it is the relative Cohn path algebra.

Given a graph $E$, and a non-empty subset $Y$, the associated relative Cohn path algebra is never simple, since the graph $E(X)$ has sinks (which imply the existence of hereditary and saturated sets). In \cite{BGOR, GOR} simplicity of a partial skew ring $A\rtimes \F$ was characterized in terms of maximal commutativity of $A$ and $\F-$simplicity of $A$. Below we show that maximal commutativity of $D(\partial_X(E))$ in $D(\partial_X(E))\rtimes_\alpha\F,$ is equivalent to Relative Condition~(L) in the graph. Therefore, for any relative graph with $Y\neq \emptyset$ and that satisfies the
Relative Condition~(L), $D(\partial_X(E))$ is never $\F-$simple.

\begin{proposition} Let $E$ be a graph and $X$ be a subset of ${\rm Reg}(E)$. Then $D(\partial_X(E))$ is maximal commutative in $D(\partial_X(E))\rtimes_\alpha\F$ if, and only if, the graph E satisfies Relative Condition~(L).
\end{proposition}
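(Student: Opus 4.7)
I would prove both directions by passing to the pointwise description of commutators in the partial skew group ring, using the combinatorial structure of $\partial_X(E)$.

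For the direction ``Relative Condition (L) $\Rightarrow$ maximal commutativity'', I would start by fixing $x=\sum_p a_p\delta_p$ in the centralizer of $D(\partial_X(E))\cdot\delta_0$. The relation $a_p\delta_p\cdot f\delta_0=f\delta_0\cdot a_p\delta_p$ rewrites, by the multiplication rule of the partial skew group ring, as $a_p(\eta)\bigl[f(\theta_{p^{-1}}(\eta))-f(\eta)\bigr]=0$ for every $\eta\in U_p$ and every $f\in D(\partial_X(E))$. The first auxiliary step is to verify that the generating family $\{1_q,1_v:q\in\F,\ v\in E^0\}$ of $D(\partial_X(E))$ separates points of $\partial_X(E)$; this is a straightforward case check (if two paths disagree at some position one is cut out by a cylinder $U_\gamma$; if one is a proper prefix of the other, the shorter one ends in $Y$ or at a sink, and the cylinder based on the next edge separates them). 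Consequently, $\mathrm{supp}(a_p)$ is contained in the set of fixed points of $\theta_p$.

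The next step is to classify these fixed points for $p\neq 0$. Writing $p=ab^{-1}$ in reduced form and using the explicit form of $\theta_p$ (``erase $b$, prepend $a$''), a prospective fixed point $\eta$ must have both $a$ and $b$ as prefixes and satisfy $\eta_{|a|+k}=\eta_{|b|+k}$ for all $k\geq 1$; the reduced form $a_{|a|}\neq b_{|b|}$ rules out $|a|=|b|$, and in the asymmetric case one of $a,b$ is a proper prefix of the other, so $\{a,b\}=\{b,bc\}$ with $c$ a closed path, and the unique possible fixed point is $\eta_0=bc^\infty$ (and similarly $\eta_0=c^\infty$ in the pure ``creation'' or pure ``erase'' cases). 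Therefore $\mathrm{supp}(a_p)\subseteq\{\eta_0\}$.

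The heart of the argument is to use Relative Condition~(L) to rule out $a_p\neq 0$ supported on $\{\eta_0\}$. Any $a_p\in D_p$ is a finite $K$-linear combination of characteristic functions of intersections of cylinders $U_q$ and vertex sets $U_v$, so there is a finite bound $M$ on the lengths of prefixes appearing in its description. Since $c$ is a closed path (traversing a simple cycle), Relative Condition~(L) gives that either $c$ has an exit, or some vertex $s(e_i)$ of $c$ lies in $Y$. In the first case, choose $m$ large enough that $|b|+m|c|>M$ and let $\eta=bc^{m}e_1\cdots e_{i-1}f\xi$, where $f$ is the exit and $\xi$ is any completion to an element of $\partial_X(E)$ (which exists as one can always extend until reaching a sink, a $Y$-vertex, or indefinitely). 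In the second case, take $\eta=bc^m e_1\cdots e_i$ ending at the $Y$-vertex $r(e_i)$. In both cases $\eta\in U_p$, $\eta\neq\eta_0$, and $\eta$ agrees with $\eta_0$ on every prefix of length $\le M$ and has the same source vertex; hence each generator of $D_0$ appearing in the expression of $a_p$ takes the same value at $\eta$ and at $\eta_0$, forcing $a_p(\eta)=a_p(\eta_0)$ and hence $a_p(\eta_0)=0$. Thus $a_p=0$, and maximal commutativity follows.

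For the converse, I would assume Relative Condition~(L) fails and exhibit an explicit element in the centralizer of $D(\partial_X(E))$ that is not in $D(\partial_X(E))\cdot\delta_0$. Take a cycle $c=e_1\cdots e_n$ without exits and with $s(e_i)\notin Y$ for every $i$. Because $c$ has no exit and no $Y$-vertex among its base points, any element of $\partial_X(E)$ starting at $s(c)$ is forced to be $c^\infty$, so $U_c=U_{c^{-1}}=\{c^\infty\}$ and $\theta_c$ is the identity on this singleton. A direct computation then gives $1_c\delta_c\cdot f\delta_0=f(c^\infty)1_c\delta_c=f\delta_0\cdot 1_c\delta_c$ for every $f\in D(\partial_X(E))$, while $1_c\delta_c\notin D(\partial_X(E))\cdot\delta_0$, contradicting maximal commutativity. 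The main technical obstacle is the classification argument in step three together with the careful choice of the perturbation $\eta$ in step four; the trickiest point is making sure the extension of the prefix $bc^m e_1\cdots e_{i-1}f$ to an actual element of $\partial_X(E)$ is always possible, which uses that every vertex emitting no edges is already a sink (hence in $\partial_X(E)$).
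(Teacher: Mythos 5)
Your proof is correct and follows essentially the same route as the paper's (which itself adapts Proposition 3.1 of Gon\c{c}alves--\"{O}inert--Royer, cited as \cite{GOR}): restrict the support of $a_p$ to the fixed points of $\theta_p$, exploit the finite ``resolution'' $M$ of an element of $D_p$, and perturb a supported point using either an exit or a $Y$-vertex on the cycle, both supplied by Relative Condition~(L); for the converse, exhibit $1_c\delta_c$ for an exitless cycle avoiding $Y$. The only substantive difference is that your explicit fixed-point classification for a general reduced $p=ab^{-1}$ also covers the mixed case $a=bc$ with $c$ closed, which the paper's quick reduction to ``$t\in W$ or $t=r^{-1}$'' passes over.
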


\begin{proof}Suppose first that $E$ satisfies Relative Condition~ $(L)$. Let $a_t\in D_t$, with $t\neq 0$ and $a_t\neq 0$, be such that $a_t\delta_t\cdot a_0\delta_0=a_0\delta_0\cdot a_t\delta_t$ for each $a_0\in D_0$, that is, such that \begin{equation}\label{eq1} \alpha_t(\alpha_{t^{-1}}(a_t)a_0)=a_ta_0 \end{equation} for all $a_0\in D_0$.

Taking  $a_0=1_{t^{-1}}$ in Equation~\eqref{eq1} we obtain that $a_t=a_t1_{t^{-1}}$ and hence the support of $a_t$ is contained in $U_t \cap U_{t^{-1}}$. So either $t\in W$ or $t=r^{-1}$ with $r\in W$. If $t\in W$ then $t$ is a closed path and if $t=r^{-1}$ then $r$ is a closed path. Furthermore, by induction we obtain that $a_t=a_t1_{(t^n)^{-1}}$ and $a_t1_{t^n}=a_t$, for all $n\in \N$.

Let $\xi\in \text{supp} (a_t)$, that is, $a_t(\xi)\neq 0$. Notice that, since $a_t\in D_t$, there exists an $M$ such that for each $\mu\in U_t$ with $\mu_1\cdots \mu_M=\xi_1\cdots \xi_M$ it holds that $a_t(\mu)=a_t(\xi)$.

Suppose that $t\in W$. If $t$ is a closed path such that the source of each edge in $t$ belongs to $X$ then, by the Relative Condition~(L), $t$ has an exit. The proof now follows as the proof of Proposition~3.1 in \cite{GOR}. So suppose $t$ is a closed path, say $t=t_1\ldots t_k$, and $s(t_j)\in Y$ for some $j$. Let $\mu=\xi_1 \ldots \xi_M \mu_{M+1} \ldots \mu_{L}$ (a finite path), where $r(\mu_L) = s(t_j)$. Then we can find an $n\in \N$ such that $1_{t^n}(\mu)=0$ and this implies that $0\neq a_t(\mu)= a_t 1_{t^n}(\mu)=0$, a contradiction. The case $t=r^{-1}$, with $r$ a closed path, is done analogously.

We conclude that there is no $a_t \in D_t$, with $t\neq 0$, such that $a_t\delta_t$ commutes with each element of $D_0\delta_0$. Hence $D(\partial_X(E))\delta_0$ is maximal commutative.

Suppose now that $E$ does not satisfy Relative Condition~$(L)$, that is, there exists a closed path $t=t_1...t_m$, such that $s(t_i)\notin Y$ for all $i$, which has no exit. Then, proceeding as in the proof of  Proposition~3.1 in \cite{GOR} we get that $1_t\delta_t$ commutes with  $D(\partial_X(E)) \delta_0$ and so  $D(\partial_X(E))$ is not maximal commutative.
\end{proof}

\section{Representations of $C_K^X(E)$ arising from relative branching systems}\label{Sec:branching}

In this section we define $E$-relative algebraic branching systems associated to a directed graph $E$ (and a subset $X$ of $\text{Reg}(E)$) and study the representations of relative Cohn path algebras associated to such systems.

We start with the definition of an $E$-relative algebraic branching system (this is motivated by definitions in \cite{GR}).

\begin{definition}{\rm \label{brancsystem}
Let $E$ be a graph and $X$ a subset of ${\rm Reg}(E)$. Let $\mathfrak{X}$ be a set and let $\{R_e\}_{e\in E^1}$, $\{D_v\}_{v\in E^0}$ be families of subsets of $\mathfrak{X}$ such that:
\begin{enumerate}
\item $R_e\cap R_d= \emptyset$ for each $d,e\in E^1$ with $d\neq e$;
\item $D_u\cap D_v= \emptyset$ for each $u,v\in E^0$ with $u\neq v$;
\item $R_e\subseteq D_{s(e)}$ for each $e\in E^1$;
\item $D_v=\bigcup\limits_{e:s(e)=v}R_e$\,\,\,\,\, for each $v \in X$; and
\item for each $e\in E^1$, there exists a bijective map $f_e:D_{r(e)}\rightarrow R_e$.
\end{enumerate}

A set $\mathfrak{X}$, with families of subsets $\{R_e\}_{e\in E^1}$, $\{D_v\}_{v\in E^0}$, and maps $f_e$ as above, is called an \emph{$E$- relative (algebraic) branching system}, and we denote it by $(\mathfrak{X},\{R_e\}_{e\in E^1}, \{D_v\}_{v\in E^0}, \{f_e\}_{e\in E^1})$, or when no confusion arises, simply by $\mathfrak{X}$.
}
\end{definition}

We show that each $E$-relative branching system induces a representation of the relative Cohn path algebra $C_K^X(E)$.

Fix an $E$-relative branching system $\mathfrak{X}$. Let $M$ be the $K$-module of all functions from $\mathfrak{X}$ taking values in $K$ and let ${\rm Hom}_K(M)$ denote the $K$-algebra of all homomorphisms from $M$ to $M$ (with multiplication given by composition of homomorphisms and the other operations given in the usual way).

Now, for each $e\in E^1$ and for each $v\in E^0$, we will define homomorphisms $S_e$, $S_e^*$ and $P_v$ in ${\rm Hom}_K(M)$. Let
$$S_e \phi=\chi_{R_e}\cdot \phi\circ f_e^{-1},$$ where $\phi \in M$ and $\chi_{R_e}$ is the characteristic function of $R_e$. For $\phi \in M$, we define the homomorphism $S_e^*$ by $$S_e^* \phi=\chi_{D_{r(e)}}\cdot \phi\circ f_e.$$ Finally, for each $v\in E^0$, and for $\phi \in M$, we define $P_v $ by $$P_v \phi=\chi_{D_v} \cdot \phi,$$
that is, $P_v$ is the multiplication operator by $\chi_{D_v}$, the characteristic function of $D_v$.

 \begin{proposition}\label{rep} Let $\mathfrak{X}$ be an $E$-relative branching system. Then there exists a representation (that is, an algebra homomorphism) $\pi: C_K^X(E)\rightarrow {\rm Hom}_K(M)$ such that
$$\pi(e)= S_e, \text{ } \pi(e^*)= S_e^* \text{ and } \pi(v)=P_v,$$ for each $e\in E^1$ and $v\in E^0$.
  \end{proposition}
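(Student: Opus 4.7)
My plan is to invoke the universal property of $C_K^X(E)$: the relative Cohn path algebra is defined as the free $K$-algebra on $E^0 \cup E^1 \cup \{e^*\}$ modulo the relations (V), (E1), (E2), (CK1), (XCK2) of Definition~\ref{Def:relativeCohn}. Thus, to build the desired homomorphism $\pi$ it suffices to show that the operators $P_v$, $S_e$, $S_e^*$ in ${\rm Hom}_K(M)$ satisfy exactly those five families of relations. Each verification reduces to a pointwise identity between characteristic functions on $\mathfrak{X}$, and each required identity corresponds neatly to one of the five conditions in Definition~\ref{brancsystem}.

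The verifications I would carry out, in order, are as follows. For (V), $P_v P_w \phi = \chi_{D_v} \chi_{D_w} \phi$, which reduces to $\delta_{v,w} P_v$ thanks to the disjointness condition (2) on the $D_v$. For (E1), the equality $P_{s(e)} S_e = S_e$ uses $R_e \subseteq D_{s(e)}$ (condition~3), and $S_e P_{r(e)} = S_e$ uses that $f_e^{-1}(R_e) \subseteq D_{r(e)}$, which is built into condition~(5). The relations (E2) are dual and use the same two facts applied to $f_e$ rather than $f_e^{-1}$. For (CK1), a pointwise computation gives
\[
S_e^* S_f \phi = \chi_{D_{r(e)}} \cdot (\chi_{R_f}\circ f_e) \cdot (\phi \circ f_f^{-1} \circ f_e),
\]
which vanishes when $e \neq f$ by the disjointness condition (1) on the $R_e$, and collapses to $P_{r(e)} \phi$ when $e=f$ because $f_e$ and $f_e^{-1}$ are mutually inverse bijections between $D_{r(e)}$ and $R_e$.

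The last and subtlest relation is (XCK2). A direct computation shows $S_e S_e^* \phi = \chi_{R_e} \cdot \phi$, so
\[
\sum_{e\in s^{-1}(v)} S_e S_e^* \phi \;=\; \Bigl(\sum_{e\in s^{-1}(v)} \chi_{R_e}\Bigr)\phi.
\]
For $v \in X$, condition~(4) of Definition~\ref{brancsystem} says $D_v = \bigcup_{e\in s^{-1}(v)} R_e$, and combined with the disjointness (1), this gives $\sum_{e\in s^{-1}(v)} \chi_{R_e} = \chi_{D_v}$, hence the sum equals $P_v$. It is precisely here that the definition of an $E$-relative branching system has been calibrated: condition~(4) is required only for $v \in X$, so (XCK2) is recovered only for vertices in $X$, and this is exactly the relation imposed in $C_K^X(E)$. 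The main point to be careful about is ensuring that the finite sum on the right of (XCK2) is well defined in ${\rm Hom}_K(M)$, which holds because every $v \in X \subseteq {\rm Reg}(E)$ emits only finitely many edges. Once all five families of relations are verified, the existence of $\pi$ follows immediately from the universal property, with the prescribed values on generators.
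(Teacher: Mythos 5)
Your proof is correct and follows essentially the same route as the paper, which simply delegates to the analogous verification in \cite[Theorem 2.2]{GR}: one checks that the operators $P_v$, $S_e$, $S_e^*$ satisfy relations (V), (E1), (E2), (CK1), (XCK2) and invokes the universal property. Your pointwise computations (including the observation that condition (4) of Definition~\ref{brancsystem} is imposed only for $v\in X$, matching the fact that (XCK2) is required only for $v\in X$) are exactly the content the paper leaves to the reader.
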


\demo
Analogous to what is done in \cite[Theorem 2.2]{GR}.
\fim

Let $E$ be a graph, with $E^0$ and $E^1$ countable and $X\subseteq \text{Reg}(E)$. Next we show that there always exists an $E$-relative algebraic branching system in $\R$ associated to $E$ and $X$. We will use the construction we present later, when we build faithful representations of relative Cohn path algebras.

\begin{proposition}
\label{existencebrancsys}
Let $E=(E^0,E^1,r,s)$ be a graph, with $E^0,E^1$ both countable. Then there exists an $E$-relative branching  system $\mathfrak{X}$, where $\mathfrak{X}$ is an (possibly unbounded) interval of $\R$.
\end{proposition}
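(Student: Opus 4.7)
My plan is to build $\mathfrak{X}$ as a disjoint union of half-open real intervals, one per vertex, and to carve out each $R_e$ as a subinterval of $D_{s(e)}$. Since $E^0$ is countable I enumerate it as $v_1,v_2,\dots$, set $D_{v_n}:=[n-1,n)$, and take $\mathfrak{X}:=\bigcup_n D_{v_n}$; this is an interval of $\R$, either of the form $[0,N)$ or $[0,\infty)$, and condition (2) of Definition~\ref{brancsystem} is immediate from the pairwise disjointness of the $D_v$.

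The choice of the $R_e$ inside $D_v$ then splits into cases according to the ``type'' of $v$. If $v\in X$ then $s^{-1}(v)$ is finite and non-empty, and I will partition $D_v$ into $|s^{-1}(v)|$ consecutive half-open subintervals of equal length, one for each emanating edge, which secures condition (4). If $v$ is a sink there is nothing to do. In the remaining cases, namely $v\in Y$ or $v$ is an infinite emitter, the set $s^{-1}(v)$ is at most countable but condition (4) is not required; I simply pick pairwise disjoint non-degenerate half-open subintervals of $D_v$, one for each edge (using, when $s^{-1}(v)$ is infinite, scaled copies of the telescoping sequence $[1-2^{1-i},\, 1-2^{-i})$ inside $D_v$). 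Conditions (1) and (3) then follow from the within-vertex disjoint choice combined with the disjointness of the $D_v$.

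Finally, since for every $e\in E^1$ both $D_{r(e)}$ and $R_e$ are non-degenerate half-open real intervals, I take $f_e:D_{r(e)}\to R_e$ to be the unique order-preserving affine bijection; this gives condition (5). The only point that requires care is keeping the case split across the four possible vertex types (in $X$, in $Y$, sink, infinite emitter) consistent — in particular, recognising that (4) must be forced only on $X$, and that infinite emitters force countably many (rather than finitely many) disjoint subintervals inside a single $D_v$. Once these cases are separated, each is an elementary interval construction and I do not foresee any substantive obstacle.
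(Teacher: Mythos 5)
Your construction is correct and is essentially the same as the paper's: enumerate the vertices, set $D_{v_i}=[i-1,i)$, partition each $D_v$ into half-open subintervals for the emanating edges (exactly covering $D_v$ only when $v\in X$, and using a summable sequence of lengths for infinite emitters), and take each $f_e$ to be the affine bijection. The only cosmetic difference is that the paper partitions $D_v$ into $N+1$ pieces when $v\in Y$ so as to leave an explicit gap, whereas you merely choose disjoint subintervals; both satisfy Definition~\ref{brancsystem}.
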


\begin{proof}
Let $E^0=\{v_i\}_{i \in I}$, where $I=\{1,2,\ldots,N\}$ or $I=\mathbb{N}$. For each $i\geq 1$ define $D_{v_i}=[i-1,i)$.

Our next goal is to define $R_e$ for each $e\in E^1$.

Let $v$ be such that $N:=|s^{-1}(v)|<\infty$. Then $s^{-1}(v)=\{e_1,\ldots, e_N\}$. If $v\in X$ then partition the interval $D_v$ into $N$ intervals, closed on the left and open on the right, and define each $R_{e_1},\ldots,R_{e_N}$ as one of these intervals (with $R_{e_i}\cap R_{e_j}=\emptyset$, for $i\neq j$).
If $v\in Y$ then partition the interval $D_v$ into $N+1$ intervals, closed on the left and open on the right, and define each of $R_{e_1},\ldots,R_{e_N}$ as one of these intervals (with $R_{e_i}\cap R_{e_j}=\emptyset$, for $i\neq j$).

Let $v$ be such that $|s^{-1}(v)|=\infty$. Then
$s^{-1}(v)=\{e_1, e_2,\ldots \}$. Partition the interval $D_v$ into $\infty$ intervals, closed on the left and open on the right, and define each of $R_{e_i}$ as one of these intervals (with $R_{e_i}\cap R_{e_j}=\emptyset$, for $i\neq j$, and length of $R_{e_i}$ equal to $\frac{1}{2^i}$).

Finally, for each $e\in E^1$ define $f_e:D_{r(e)}\rightarrow R_e$ as the affine bijection between these intervals.

It is now standard to check that $\mathfrak{X}$ above is a relative branching system.
\end{proof}

Proposition \ref{existencebrancsys} together with Proposition \ref{rep} guarantees that every relative Cohn path algebra $C_K^X(E)$ of a countable graph $E$  may be represented in ${\rm Hom}_K(M)$. Let us summarize this result in the following corollary:

\begin{corolario}\label{cor1} Given a countable graph $E$, there exists a homomorphism $\pi:C_K^X(E)\rightarrow Hom_K(M)$ such that $$\pi(v)(\phi)=\chi_{D_v}.\phi,\,\,\,\,\,\pi(e)(\phi)=\chi_{R_e}.\phi\circ f_e^{-1}\,\,\,\,\text{ and }\,\,\,\,\pi(e^*)(\phi)=\chi_{D_{r(e)}}.\phi\circ f_e$$ for each $\phi\in M$,  where $M$ is the $K$-module of all functions from $\mathfrak{X}$ taking values in $K$, $\mathfrak{X}$ is an (possible unlimited) interval of $\R$, and $R_e$ and $D_v$ are as in Proposition \ref{existencebrancsys}.
\end{corolario}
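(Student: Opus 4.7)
The statement is essentially a packaging of the two preceding propositions, so my plan is to simply chain them together and verify that the formulas match. First I would invoke Proposition~\ref{existencebrancsys} on the countable graph $E$ (with its fixed subset $X\subseteq {\rm Reg}(E)$) to obtain an $E$-relative algebraic branching system $(\mathfrak{X},\{R_e\}_{e\in E^1},\{D_v\}_{v\in E^0},\{f_e\}_{e\in E^1})$, where $\mathfrak{X}$ is a (possibly unbounded) interval of $\R$ and the sets $D_v$, $R_e$ together with the affine bijections $f_e:D_{r(e)}\to R_e$ are constructed explicitly as in the proof of that proposition. At this point all the combinatorial/geometric verification (disjointness of the $D_v$'s, disjointness of the $R_e$'s inside a common $D_{s(e)}$, coverage of $D_v$ by the $R_e$'s for $v\in X$, and bijectivity of the $f_e$'s) is already taken care of by Proposition~\ref{existencebrancsys}.

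Next I would apply Proposition~\ref{rep} to this specific branching system. Proposition~\ref{rep} yields a $K$-algebra homomorphism $\pi:C_K^X(E)\to {\rm Hom}_K(M)$ defined on the canonical generators by $\pi(v)=P_v$, $\pi(e)=S_e$ and $\pi(e^*)=S_e^*$, where $M$ is the $K$-module of all functions from $\mathfrak{X}$ to $K$. Unravelling the definitions of $P_v$, $S_e$ and $S_e^*$ given just before Proposition~\ref{rep}, these are exactly the formulas
\[
\pi(v)(\phi)=\chi_{D_v}\cdot\phi,\quad \pi(e)(\phi)=\chi_{R_e}\cdot\phi\circ f_e^{-1},\quad \pi(e^*)(\phi)=\chi_{D_{r(e)}}\cdot\phi\circ f_e,
\]
which is precisely the conclusion of the corollary.

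There is no real obstacle here: the nontrivial content (construction of $\mathfrak{X}$ as a real interval, construction of the homomorphism from the universal relations) has already been discharged in Propositions~\ref{existencebrancsys} and~\ref{rep}. The proof is therefore a one-line combination, and I would simply write ``Apply Proposition~\ref{existencebrancsys} to produce a relative branching system on an interval $\mathfrak{X}\subseteq\R$, then apply Proposition~\ref{rep} to obtain the desired representation, noting that the operators $P_v$, $S_e$, $S_e^*$ coincide with the formulas in the statement.''
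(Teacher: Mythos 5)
Your proposal is correct and follows exactly the paper's own argument: the corollary is stated immediately after the sentence ``Proposition \ref{existencebrancsys} together with Proposition \ref{rep} guarantees that every relative Cohn path algebra $C_K^X(E)$ of a countable graph $E$ may be represented in ${\rm Hom}_K(M)$,'' so the intended proof is precisely the two-step combination you describe. Your additional remark that the operators $P_v$, $S_e$, $S_e^*$ unravel to the displayed formulas is the only verification needed, and it is accurate.
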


\begin{obs}{\rm
Propositions  \ref{rep}, \ref{existencebrancsys}, and Corollary \ref{cor1} generalize to relative Cohn path algebras Theorems 2.2, 3.1, and Corollary 3.2 in \cite{GR}.
}
\end{obs}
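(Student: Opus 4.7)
The plan is to verify the remark by specializing all three results to the case $X=\mathrm{Reg}(E)$ (equivalently, $Y=\emptyset$), in which case Theorem \ref{relativetoLPA} and the definition of a Leavitt path algebra give $C_K^X(E) = L_K(E)$. The claim is then that under this specialization our Definition \ref{brancsystem}, Proposition \ref{rep}, Proposition \ref{existencebrancsys} and Corollary \ref{cor1} recover verbatim the corresponding statements in \cite{GR}.

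First I would compare definitions. In Definition \ref{brancsystem}, condition (4) requires $D_v = \bigcup_{e:s(e)=v} R_e$ only for $v\in X$. When $X=\mathrm{Reg}(E)$, this holds at every regular vertex, which is precisely the covering condition imposed in the branching system definition in \cite{GR}. Conditions (1)--(3) and (5) are stated identically in both papers, so $E$-relative branching systems with $X=\mathrm{Reg}(E)$ are exactly the $E$-algebraic branching systems of \cite{GR}.

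Next I would match the representation theorems. In Proposition \ref{rep} the homomorphism $\pi$ is defined on generators by the same formulas $S_e$, $S_e^*$, $P_v$ used to define the representation in Theorem 2.2 of \cite{GR}. Under the specialization $Y=\emptyset$ the defining relations of $C_K^X(E)$ collapse to the Leavitt path algebra relations, and one checks that verifying relation (XCK2) for $v\in X=\mathrm{Reg}(E)$ is exactly verifying (CK2) at every regular vertex; so Proposition \ref{rep} specializes to Theorem 2.2 of \cite{GR}. For Proposition \ref{existencebrancsys}, the two-case split in the construction of $R_e$ collapses: when $Y=\emptyset$, only the ``$v\in X$'' case (partitioning $D_v$ into $N$ disjoint half-open subintervals) occurs, reproducing the interval construction of Theorem 3.1 of \cite{GR}. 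Corollary \ref{cor1} is then a formal combination of the previous two and reduces to Corollary 3.2 of \cite{GR} by the same substitution.

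The only potentially non-routine point is making sure that nothing in our construction depends on $Y$ being nonempty; concretely, one should confirm that the extra ``slack'' subinterval introduced in the $v\in Y$ case in the proof of Proposition \ref{existencebrancsys} truly disappears when $Y=\emptyset$, and that the use of Remark \ref{need for injectivity}-type arguments (elements $v-\sum_{e\in s^{-1}(v)}ee^*$ with $v\in Y$) never enters the proofs of the specialized statements. Both are immediate from an inspection of the proofs, so the verification is entirely a matter of unfolding definitions.
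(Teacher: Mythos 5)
Your verification is correct and matches the paper's (implicit) justification: the remark is stated without proof precisely because the specialization $X=\mathrm{Reg}(E)$, $Y=\emptyset$ collapses Definition~\ref{brancsystem}, Proposition~\ref{rep}, Proposition~\ref{existencebrancsys} and Corollary~\ref{cor1} to the corresponding statements of \cite{GR}, exactly as you unfold. Nothing further is needed.
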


Let $(\mathfrak{X},\{R_e\}_{e\in E^1},\{D_v\}_{v\in E^0}, \{f_e\}_{e\in E^1})$ be an $E$-relative branching system. For a closed path $\alpha=e_1...e_n$, let $f_\alpha:D_v\rightarrow R_{e_1}\subseteq D_v$ denote the composition $$f_\alpha:=f_{e_1}\circ...\circ f_{e_n}.$$ Notice that since $\alpha$ is a path $f_\alpha$ is well defined.

We are now in position to prove one of the main results of the paper.

\begin{theorem}\label{faithfulrep} Let $E$ be a graph and $(\mathfrak{X},\{R_e\}_{e\in E^1},\{D_v\}_{v\in E^0}, \{f_e\}_{e\in E^1})$ be an $E$-relative branching system. Let $\pi$ be the representation of $C_K^X(E)$ induced by this $E$-relative branching system. Then $\pi$ is faithful if, and only if, the following conditions are satisfied:
\begin{enumerate}
    \item for each $e\in E^1$ and $v\in E^0$, $R_e$ and $D_v$ are non-empty;
    \item $D_v \neq \bigcup_{e\in s^{-1}(v)} R_e$ for all $v\in Y$; and
    \item for each finite set of paths $\{c^1,...,c^m\}$ in $E$, beginning on the same vertex $w$ with $c=e_1\cdots e_n$ a cycle without exits such that $s(e_i) \notin Y$ for every $i \in \{1,\ldots,n\}$, there is an element $z_0\in D_w$ such that $f_{c}^{j}(z_0)\neq z_0$ for all $j \in \{1,\ldots,m\}$.
\end{enumerate}
\end{theorem}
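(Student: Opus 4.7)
The plan is to prove the two directions separately, using Proposition~\ref{PropReduction} as the main engine for sufficiency. For necessity, I would show that the failure of each of the three conditions produces an explicit element of $\ker\pi$. If some $D_v$ or $R_e$ is empty, then $\pi(v)$ or $\pi(e)$ is already zero, giving~(1). If $D_v = \bigcup_{e\in s^{-1}(v)} R_e$ for some $v\in Y$, the disjointness of the $R_e$ (Definition~\ref{brancsystem}(1)) forces $\pi(v-\sum_e ee^*) = \chi_{D_v}-\chi_{\bigcup R_e} = 0$, while $v-\sum_e ee^*$ is a nonzero element of $C_K^X(E)$, giving~(2). For~(3), suppose that for some cycle $c=e_1\cdots e_n$ without exits with all $s(e_i)\notin Y$, and for some $m$, every $z\in D_w$ is fixed by some $f_c^j$ with $j\le m$. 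Then $\alpha_m:=\prod_{j=1}^m(w-c^j)$ is nonzero in $wC_K^X(E)w$ (identified with $K[x,x^{-1}]$ via $c\leftrightarrow x$, valid in this sub-case since $cc^*=c^*c=w$); yet $\pi(\alpha_m)=0$, because the operators $\pi(w-c^j)$ mutually commute, and at each $x\in D_w$ one can commute to the outside the factor indexed by some $j_0$ with $f_c^{j_0}(x)=x$; at such an $x$ one has $\pi(c^{j_0})\psi(x)=\psi(x)$ for any $\psi$ supported in $D_w$, so this factor vanishes at $x$, hence so does the whole product.

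For sufficiency, I fix $\alpha\ne 0$ in $C_K^X(E)$ and apply Proposition~\ref{PropReduction} to obtain $\mu,\eta\in{\rm Path}(E)$ with $\beta:=\mu^*\alpha\eta\ne 0$ of one of three types. Types~(i) and~(ii) are immediate: $\pi(ku)=k\chi_{D_u}\ne 0$ by~(1), and $\pi(k(v-\sum_e ee^*))=k\chi_{D_v\setminus\bigcup_e R_e}$ is nonzero by~(2) (the containment $\bigcup_e R_e\subsetneq D_v$ being strict). In either case $\pi(\beta)\ne 0$ forces $\pi(\alpha)\ne 0$.

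The heart of the proof is type~(iii), where $\beta\in wC_K^X(E)w$ for some cycle $c=e_1\cdots e_n$ without exits at $w$. I split according to whether every $s(e_i)\notin Y$. In the first sub-case, Lemma~\ref{Lem:corner} gives $wC_K^X(E)w=K\langle c,c^*\rangle$ and the CK2 relations collapse $cc^*=c^*c=w$, identifying this corner with $K[x,x^{-1}]$; after multiplying $\beta$ on the right by a power of $c^*$, I may assume $\beta=\sum_{r=0}^M k_rc^r$ with $k_M\ne 0$. Condition~(3) with $m=M$ produces $z_0\in D_w$ with $f_c^j(z_0)\ne z_0$ for $j=1,\ldots,M$; then injectivity of $f_c$ propagates this (if $f_c^{r_1}(z_0)=f_c^{r_2}(z_0)$ for $0\le r_1<r_2\le M$, then $f_c^{r_2-r_1}(z_0)=z_0$, a contradiction), so $f_c^0(z_0),\ldots,f_c^M(z_0)$ are pairwise distinct. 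With $\phi=\chi_{\{z_0\}}$ and the formula $\pi(c^r)\phi(x)=\chi_{f_c^r(D_w)}(x)\phi(f_c^{-r}(x))$, evaluating $\pi(\beta)\phi$ at $x_0=f_c^M(z_0)$ leaves only the $r=M$ summand and returns $k_M\ne 0$.

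The remaining sub-case, in which some $s(e_i)\in Y$, is the main obstacle: the corner $wC_K^X(E)w$ acquires extra $\mu_k\mu_k^*$ generators (Lemma~\ref{Lem:corner}), $cc^*\ne w$, and condition~(3) is silent on cycles of this type. My plan here is to reduce to the previous sub-case through the isomorphism $\psi:L_K(E(X))\to C_K^X(E)$ of Theorem~\ref{relativetoLPA}. The key observation is that a cycle in $E(X)$ has no exits exactly when it sits in $E$, has no exits in $E$, and has every source vertex outside $Y$ (the extra edges $e'$ of $E(X)$ land in the sinks $v'\in Y'$ and so cannot appear in a cycle, and any $s(e_i)\in Y$ spawns an $e'$-exit of the cycle in $E(X)$). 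Hence applying Corollary~\ref{Cor:ReductionTheorem} to $\tilde\beta:=\psi^{-1}(\beta)\ne 0$ yields $\mu',\eta'\in{\rm Path}(E(X))$ with $\mu'^*\tilde\beta\eta'$ equal either to $ku'$ for some $u'\in E(X)^0$, or to $p(c')$ for a cycle $c'$ in $E$ satisfying the hypothesis of~(3). Applying $\psi$ and using $\psi(c')=c'$ yields the corresponding identity in $C_K^X(E)$: the polynomial case reduces to the first sub-case applied to $p(c')$, and the vertex case reduces to conditions~(1) or~(2) since $\psi(u')$ equals $u'$, $\sum_e ee^*$, or $v-\sum_e ee^*$ according as $u'$ lies in $E^0\setminus Y$, $Y$, or $Y'$. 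In every case $\pi(\alpha)\ne 0$.
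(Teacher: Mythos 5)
Your proof is correct, and it reaches the same destination by a noticeably different road, so a comparison is worthwhile. For sufficiency the paper does not use Proposition~\ref{PropReduction} at all: it conjugates the whole representation by the isomorphism $\phi$ of Theorem~\ref{relativetoLPA}, sets $\overline{\pi}=\pi\circ\phi^{-1}$ on $L_K(E(X))$, and applies the Leavitt-path-algebra reduction theorem (Corollary~\ref{Cor:ReductionTheorem}) once, observing exactly as you do that a cycle without exits in $E(X)$ is a cycle of $E$ with no exits and all sources outside $Y$, and that $\phi^{-1}$ sends the three kinds of vertices of $E(X)^0$ to $u$, $\sum ee^*$ and $v-\sum ee^*$. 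You instead run Proposition~\ref{PropReduction} inside $C_K^X(E)$ and only fall back on the $E(X)$ transfer for the residual corner $wC_K^X(E)w$ when the cycle meets $Y$; this is more intrinsic but ends up invoking the same isomorphism anyway, so the paper's global transfer is the more economical organization, while your version makes visible exactly where the relative reduction theorem alone does not suffice. Your treatment of the sub-case with all $s(e_i)\notin Y$ (choosing $z_0$ via condition~3, noting the iterates $f_c^0(z_0),\dots,f_c^M(z_0)$ are pairwise distinct, and testing against $\chi_{\{z_0\}}$ at $f_c^M(z_0)$) is precisely the computation the paper compresses into one line. On the necessity side the two proofs differ in the cycle case: the paper exhibits a single relation $\pi(c^{j_0})=\pi(w)$, which covers the full negation of condition~3 only after one notes that if every $z\in D_w$ is fixed by some $f_c^{j}$ with $j\le m$ then $j_0=\operatorname{lcm}(1,\dots,m)$ fixes all of $D_w$; your product $\prod_{j=1}^m(w-c^j)$ handles the general negation directly and is arguably the cleaner argument, at the small cost of having to justify that this product is nonzero in the corner $\cong K[x,x^{-1}]$ (which you do). The only blemishes are cosmetic: to normalize $\beta=\sum_{r=m}^n k_rc^r$ to nonnegative exponents you should multiply by a power of $c$, not of $c^*$, and the identification $cc^*=w$ uses (XCK2) at each $s(e_i)\in X$ together with the absence of exits, which deserves a sentence.
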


\begin{proof}

Suppose that Conditions 1. to 3. are satisfied.
Let $\pi$ be a representation of $C_K^X(E)$ and $\phi:C_K^X(E) \rightarrow L_K(E(X)) $ be the isomorphism given in Theorem~\ref{relativetoLPA}. Then $\overline{\pi}:=\pi \circ {\phi}^{-1}$ is a representation of $L_K(E(X))$. We will show that $\overline{\pi}$ is injective, and hence $\pi$ is also injective.

Consider $\alpha \in L_K(E(X))$. By Corollary \ref{Cor:ReductionTheorem} we have that there exist $\mu, \eta \in {\rm Path}(E(X))$ such that either: $0 \neq \mu^*\alpha \eta = ku$, for some $k \in K^{\times}$ and $u \in E(X)^0$; or $0 \neq \mu^*\alpha \eta=p(c)$, where $c$ is a cycle without exits in $E(X)$ and $p(x)$ a polynomial in $K[x,x^{-1}]$.

Suppose the case $0 \neq \mu^* \alpha \eta = ku$. If $u \in E^0\setminus Y$, then $\overline{\pi}(u)=\pi(u)$ and hence for each $\rho\in M$,  where $M$ is the $K$-module of all functions from $\mathfrak{X}$ taking values in $K$, $\overline{\pi}(u)(\rho) = \chi_{D_u} \cdot \rho \neq 0$ by 1.. If $u \in Y$, then $\phi^{-1}(u)=\sum_{e \in s^{-1}(u)} ee^*$ and again for each $\rho\in M$, $\overline{\pi}(u)(\rho)= \chi_{\cup R_e} \cdot \rho \neq 0$ by 1.. Now if $u \in Y'$, we have $\phi^{-1}(u)=u-\sum_{e \in s^{-1}(u)} ee^*$ and for each $\rho\in M$, $\overline{\pi}(u)(\rho)=\chi_{D_v\setminus \cup R_e} \cdot \rho \neq 0$ by 2.. In any case $\overline{\pi}(\mu^*\alpha \eta) \neq 0$ and necessarily $\overline{\pi}(\alpha) \neq 0$.

Assume now that $0 \neq \mu^*\alpha \eta = \sum_{r=m}^n l_r c^r$, for some cycle $c$ without exits in $E(X)$, $l_r \in K$, $m,n \in \mathbb{Z}$, $m \leq n$. If there are negative indices of $c$ in the previous sum, multiply $\mu^*\alpha \eta$ on the left by a certain power of $c$, say $c^s$, so that $c^s \mu^* \alpha \eta = \sum_{r=0}^n l_r c^r$ where $n \in \mathbb{N}$. Notice that if $c=e_1 \cdots e_t$ is a cycle without exits in $E(X)$ then the vertices $s(e_i)_E \notin Y$ for every $i=1,\ldots,t$; hence $\phi^{-1}(c)=c$ and $\overline{\pi}(c^s\mu^* \alpha \eta)=\pi(c^s\mu^*\alpha \eta)$. Then for each $\rho\in M$, $\overline{\pi}(c^s\mu^* \alpha \eta)(\rho)= \sum_{r=0}^n \chi_{D_w} \cdot \rho((f_c)^{-r}) \neq 0$ by 3.. So we have that $\overline{\pi}(c^s\mu^*\alpha \eta) \neq 0 $ which gives $\overline{\pi}(\alpha) \neq 0$.

Since $\overline{\pi}$ is injective, it immediately follows that $\pi$ is injective.

In order to prove the converse statement suppose that one of 1., 2. or 3. is not satisfied. We will show that this implies that $\pi$ is not injective.

For the first situation, if $D_v = \emptyset$ for some vertex $v$ then $\pi(v)=0$; if $R_e = \emptyset$ for some edge $e$ then $\pi(e)=0$.

In the second case imagine $D_v = \bigcup_{e\in s^{-1}(v)} R_e$ for some $v \in Y$. Then $\pi(v-\sum_{e\in s^{-1}(v)}ee^*)=0$.

Finally suppose there exist $j_0$ and a cycle $c=e_1\cdots e_n$ without exits based at $w$ such that $s(e_i) \notin Y$ for every $i$, with the condition that $f_c^{j_0}(z)=z$ for every $z \in D_w$. Then we have that $\pi(c^{j_0})=\pi(w)$ since

$\begin{array}{ll}
f_c(D_{r(e_n)}) & = f_{e_1} \cdots f_{e_n}(D_{r(e_n)}) = f_{e_1} \cdots f_{e_{n-1}}(R_{e_n}) \\
& = f_{e_1} \cdots f_{e_{n-1}}(D_{r(e_{n-1})})= f_{e_1} \cdots f_{e_{n-2}}(R_{e_{n-1}})\\
&= \ldots \\
& = f_{e_1}(R_{e_2}) = R_{e_1} = D_{s(e_1)} = D_w,
\end{array}$

and for each $\rho\in M$, $\pi(c^{j_0})(\rho)= \chi_{f_c^{j_0}(D_{r(e_n)})} \cdot \rho(f_c^{-j_0})= \chi_{D_w} \cdot \rho = \pi(w)(\rho)$.
\end{proof}

\begin{obs}{\rm  We remark that Theorem \ref{faithfulrep} generalizes \cite[Theorem 4.2]{GR} which refers to Leavitt path algebras of row finite graphs with no sinks (and provides only a sufficient condition for faithfulness of the representations). It also generalizes the main result in \cite{GR4} (Theorem 4.3, which deals with separated graphs without loops such that all edges have the same source and the range map is injective, and again only provides a sufficient condition for faithfulness) in the context of non separated graphs.
Therefore, more than providing the correct ambience for the study of relative Cohn path algebras, our theorem above, taking $X$ as the set of regular vertices, improves on the known theory of Leavitt path algebras.}
\end{obs}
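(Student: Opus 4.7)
The plan is to substantiate each of the three generalization claims in the remark by exhibiting the earlier results as special cases of Theorem \ref{faithfulrep}.

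First, I would specialize to $X={\rm Reg}(E)$, so that $Y=\emptyset$ and $C_K^X(E)=L_K(E)$. In this case Condition (2) of Theorem \ref{faithfulrep} is vacuous, and the restriction ``$s(e_i)\notin Y$ for every $i$'' in Condition (3) disappears automatically. Hence the criterion reduces to: the sets $R_e$ and $D_v$ are all nonempty, and for every cycle without exits one can separate any finite set of its powers from the identity on $D_w$. Restricting further to row-finite graphs with no sinks, I would verify that Definition \ref{brancsystem} recovers the notion of branching system used in \cite[Theorem 4.2]{GR}, and that the two surviving conditions coincide with the hypotheses of that theorem. The result of \cite{GR} supplies only the implication ``conditions $\Rightarrow$ faithful''; Theorem \ref{faithfulrep} supplies the converse as well, and drops both the row-finite and the no-sinks assumptions, which justifies the first generalization claim.

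Second, for \cite[Theorem 4.3]{GR4}, I would invoke the identification (indicated in the introduction and already used in Section \ref{section2}) under which the Leavitt-type algebra of a separated graph without loops whose edges share a common source and whose range map is injective is isomorphic to a relative Cohn path algebra $C_K^X(E)$ for a suitable non-separated graph $E$ and $X\subseteq{\rm Reg}(E)$. Writing this identification out explicitly translates an $E$-relative branching system (Definition \ref{brancsystem}) into the branching data of \cite{GR4}. Since the graphs in question have no cycles, Condition (3) of Theorem \ref{faithfulrep} is vacuous, while Conditions (1) and (2) match the sufficient conditions of \cite[Theorem 4.3]{GR4}; hence that result is recovered, and turning sufficiency into equivalence is new. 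The last sentence of the remark is then immediate: with $X={\rm Reg}(E)$ the theorem becomes a sharp faithfulness criterion for $L_K(E)$ valid for an arbitrary graph, strictly stronger than what was previously available.

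The main obstacle will be the \cite{GR4} dictionary: one has to check carefully that the separated-graph branching systems considered there do correspond, under the algebra isomorphism, to $E$-relative branching systems in the sense of Definition \ref{brancsystem}, and that our Condition (2) captures precisely the failure of the (CK2) relation on vertices of $Y$ that is responsible for the extra generators in the separated picture. Once this correspondence is set up, both generalization claims are formal consequences of Theorem \ref{faithfulrep} by specialization, and the comparison with \cite{GR} follows simply by letting $Y=\emptyset$ and re-reading Conditions (1)--(3).
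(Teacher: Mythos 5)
The statement you are addressing is a remark for which the paper supplies no proof at all: the generalization claims are treated as immediate specializations of Theorem \ref{faithfulrep}. Your outline --- setting $X={\rm Reg}(E)$ so that $Y=\emptyset$, Condition (2) becomes vacuous and Condition (3) loses its $Y$-restriction, thereby recovering (and strengthening to an equivalence) \cite[Theorem 4.2]{GR}, and then translating the separated-graph setting of \cite[Theorem 4.3]{GR4} (where the hypotheses force the absence of cycles, so Condition (3) is vacuous) into the relative Cohn picture --- is precisely the intended justification and is correct.
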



Finally, motivated by Corollary 4.3 in \cite{GR} and using Theorem \ref{faithfulrep}, we construct below a faithful representation of $C_K^X(E)$ for any graph $E$ and subset $X\subseteq {\rm Reg}(E)$.

\begin{example}\label{Ex:faithfulrepr}{\rm Let $E=(E^0,E^1,r,s)$ be a graph with $E^0, E^1$ both countable, and let $X\subseteq {\rm Reg}(E)$. Consider $D_v$ for $v \in E^0$, and $R_e$ for $e \in E^1$, constructed as in the proof of Proposition \ref{existencebrancsys}.

We need to define bijective maps $f_e:D_{r(e)}\rightarrow R_e$, for each $e\in E^1$. To do this, first fix an irrational number $\theta\in [0,1)$, and let $h_\theta:[0,1)\rightarrow [0,1)$ be defined by $h_\theta(x)=(x+\theta) \ {\rm mod}(1)$, which is a bijective map. Consider for any $e \in E^1$, $g_e:D_{r(e)}\rightarrow [0,1)$ and $\widetilde{g_e}:[0,1)\rightarrow R_e$ as the affine bijections between these intervals respectively. Consider $f_e= \widetilde{g_e} \circ h_{\theta} \circ g_e$. This defines $f_{e}:D_{r(e)}\rightarrow R_{e}$ as a bijective map.

Then we have an $E$-relative branching system $$(\mathfrak{X}, \{D_v\}_{v\in E^0}, \{R_e\}_{e\in E^1}, \{f_e\}_{e\in E^1}),$$ and hence we obtain a representation $\pi:C_K^X(E)\rightarrow {\rm Hom}_K(M)$ (as in Proposition \ref{rep}).

We will prove that the representation $\pi:C_K^X(E)\rightarrow {\rm Hom}_K(M)$ induced by the $E$-relative branching system constructed above is faithful.

All we need to do is verify the hypothesis of Theorem \ref{faithfulrep}. By construction it clearly satisfies 1. and 2.. We check that for each finite set of paths $\{c^1,...,c^m\}$ in $E$, beginning on the same vertex $w$, and with $c=e_1\cdots e_n$ a cycle without exits such that $s(e_i) \notin Y$ for every $i \in \{1,\ldots,n\}$, there is an element $z_0\in D_w$ such that $f_{c}^{j}(z_0)\neq z_0$ for all $j \in \{1,\ldots,m\}$.

So, let $c=e_1\ldots e_n$ be a cycle without exits such that $s(e_i) \notin Y$ for every $i \in \{1,\ldots,n\}$, and beginning on $w$. In this case we have that $R_{e_1}=D_{r(e_{n})}$ and $R_{e_i}=D_{r(e_{i-1})}$ for $i=2,\ldots,n$. Notice that then $\widetilde{g_{e_1}}=g_{e_n}^{-1}$ and $\widetilde{g_{e_i}}=g_{e_{i-1}}^{-1}$ for $i=2,\ldots,n$. Hence
$$f_c = f_{e_1} \circ \ldots \circ f_{e_n} = \widetilde{g_{e_1}} \circ h_{\theta} \circ g_{e_1} \circ \ldots \circ \widetilde{g_{e_n}} \circ h_{\theta} \circ g_{e_n} = \widetilde{g_{e_1}} \circ h_{\theta}^n \circ g_{e_n},$$
and therefore $f_c^j =\widetilde{g_{e_1}} \circ h_{\theta}^{nj} \circ g_{e_n}$. It follows that, if $z\in D_w$ is a rational number, then $f_{e_1}\circ...\circ f_{e_n}(z)$ is a irrational number and hence no rational number is a fixed point for $f_{c}$.
Then, for any finite set $\{c^1,...,c^m\}$ in $E$, beginning on $w$, we may choose $z_0\in D_w$ to be a rational number, and so $f_{c}^{j}(z_0)\neq z_0$ for all $j \in \{1,\ldots,m\}$ as desired.
}
\end{example}

Observe that the construction in Example \ref{Ex:faithfulrepr} gives \cite[Corollary 4.3]{GR} in the case that $E$ is a row finite graph with no sinks and $X={\rm Reg}(E)$ (i.e., for Leavitt path algebras). Also motivated by \cite[Corollary 4.3]{GR}, in \cite{GLR} the authors build faithful representations of graph C*-algebras associated to countable graphs (see Proposition 3.2). Therefore our construction can also be seen as an ``algebraization'' of the C*-construction.

\section*{Acknowledgements}
The first author was partially supported by the Spanish MEC and Fondos FEDER through project MTM2016-76327-C3-1-P; and by the Junta de Andaluc\'{i}a and Fondos FEDER, jointly, through project FQM-7156.

The second author was partially supported by Conselho Nacional de Desenvolvimento Cient\'{i}fico e Tecnol\'{o}gico (CNPq) - Brazil.

\end{document}